\numberwithin{equation}{section}
\newtheorem{theorem}{Theorem}[section]
\newtheorem{lemma}[theorem]{Lemma}
\newtheorem{corollary}[theorem]{Corollary}
\newcommand{\err}{\mathsf{e}}
\newcommand{\LB}{\triangle^*}
\newcommand{\T}{\mathcal{T}}
\newcommand{\iprod}[1]{\langle#1\rangle}
\newcommand{\Sphere}{\mathbb{S}}
\newcommand{\Hilb}{\mathcal{H}}
\newcommand{\E}{\mathcal{E}}
\newcommand{\F}{\mathcal{F}}
\newcommand{\C}{\mathbb{C}}
\newcommand{\QQ}{\mathcal{Q}}
\newcommand{\R}{{\mathbb R}}
\newcommand{\Lapl}{\mathcal{L}}
\newcommand{\vecspan}{\operatorname{span}}
\newcommand{\grad}{\operatorname{grad}}
\newcommand{\vecG}{\boldsymbol{G}}
\newcommand{\vecU}{\boldsymbol{U}}
\author{Q.~T.~Le Gia and  William McLean\thanks{School 
     of Mathematics and Statistics,
     University of New South Wales, 
     Sydney, Australia.
     Email:{\tt qlegia@unsw.edu.au}, {\tt w.mclean@unsw.edu.au}}}
\title{Solving parabolic equations on the unit sphere via Laplace transforms
and radial basis functions}
\begin{document}
\maketitle
\begin{abstract}
We propose a method to construct numerical solutions of parabolic equations 
on the unit sphere. The time discretization uses Laplace transforms and 
quadrature. The spatial approximation of the solution employs radial 
basis functions restricted to the sphere. The method allows us 
to construct high accuracy numerical solutions in parallel.
We establish $L_2$ error estimates for smooth and nonsmooth initial data,
and describe some numerical experiments.
\end{abstract}

\vspace{0.5cm}

\noindent
{\bf Keywords}: parabolic equations, Laplace transforms, unit sphere, radial basis functions

\vspace{0.5cm}

\noindent
{\bf AMS subject classifications:} 35R01, 65N30

\section{Introduction}
We consider the initial-value problem 
\begin{equation}\label{equ:heat}
\partial_t u + A u = f(t), \quad\text{for $t>0$,}\quad\text{with $u(0)=u_0$,}
\end{equation}
where $\partial_t = \partial/\partial t$ and $A$ is a linear, self-adjoint,
positive-semidefinite, second-order elliptic partial differential operator 
on the unit sphere.  In our standard example, $-A$ is the 
Laplace--Beltrami operator.  The source term~$f(t)$ may depend on
the spatial variables but we suppress this dependence in our notation,
viewing $f(t)$ as an element of a function space on the sphere.

Instead of using time stepping for the numerical solution, as was done
previously~\cite{LeG05a}, our approach is to represent the solution 
of~\eqref{equ:heat}
as an inverse Laplace transform, which is then approximated by quadrature.
Developed first for parabolic problems by Sheen, Sloan and 
Thom\'ee~\cite{SheSloTho99}, such an approach is also effective for 
some evolution equations with memory~\cite{McLTho10}.  These and related
papers have discussed thoroughly the time discretization, but for
the space discretization have considered only piecewise linear finite 
elements on a bounded domain in~$\R^n$.  Here, we propose instead a
space discretization using spherical radial basis functions (SRBFs), 
which are convenient for parabolic problems on Riemannian surfaces 
such as the unit sphere~$\Sphere^n=\{\,x\in\R^{n+1}:|x|=1\,\}$.

Denoting the Laplace transform of~$u$ with respect to~$t$ by
\begin{equation}\label{equ:Lap def}
  \hat u(z) = \Lapl\{u(t)\} :=\int_0^\infty e^{-zt} u(t) dt,
\end{equation}
we find that the solution of~\eqref{equ:heat} formally satisfies
\begin{equation}\label{equ:Lap of heat}
   (zI + A)\hat u(z) = g(z) := u_0 + \hat f(z),
\end{equation}
where $I$ denotes the identity operator.  The spectrum of~$A$ is
a subset of the half-line~$[0,\infty)$, so if $z\notin(-\infty,0]$
and if the Laplace transform~$\hat f(z)$ exists, then
\begin{equation}\label{equ:uhat}
  \hat u(z) = (zI+A)^{-1} g(z).
\end{equation}
When $\hat f(z)$ is analytic and bounded for~$\Re z>0$, the 
solution~$u(t)$ can be recovered via the Laplace inversion formula
\begin{equation}\label{equ:inv Lap}
u(t) = \frac{1}{2\pi i} \int_{\Gamma_0} e^{zt} \hat u(z) dz,  
\quad\text{for $t>0$,}
\end{equation}
where $\Gamma_0$ is the contour~$\Re z=\omega$, for any $\omega>0$,
with $\Im z$ increasing.

Section~\ref{sec: prelim} summarizes some technical results and 
assumptions needed for our subsequent analysis.
In Section~\ref{sec: discrete} we describe the time discretization
and quote a known error estimate (Theorem~\ref{thm:time dis}), after
which we introduce the space discretization using SRBFs.  The heart
of the paper is Section~\ref{sec: error}, where we prove two error
bounds for the space discretization by adapting the analysis of
Thom\'ee~\cite{Tho97} for a finite element approximation of the
heat equation on a domain in~$\R^n$.  The first bound 
(Theorem~\ref{thm:space dis}) requires some spatial regularity of 
$u_0$~and $f$, and is proved by estimating a contour integral.  
The second bound is proved by an energy argument, and assumes $f\equiv0$ 
but allows nonsmooth initial data~$u_0\in L_2(\Sphere^n)$.  Both bounds
include a factor that blows up as~$t\to0$.  Finally, 
Section~\ref{sec: num exp} describes the results of some numerical
experiments.
\section{Preliminaries}\label{sec: prelim}
\subsection{Resolvent estimates}
We now view $A$ as an abstract, densely defined, self-adjoint and
positive-semidefinite linear operator on a complex Hilbert space~$\Hilb$.
Assume further that $(I+A)^{-1}:\Hilb\to\Hilb$ is compact, so $A$ has a
discrete spectrum, and order the eigenvalues 
$0\le\lambda_1\le\lambda_2\le\cdots$.  Note that $\lambda_j\to\infty$ 
as~$j\to\infty$ if $\Hilb$ is infinite dimensional.

For any~$\varphi>0$, the spectrum of~$A$ is a subset of a closed 
sector in the complex plane~$\C$, 
\[
\Sigma_{\varphi}:= \{\,z \ne 0: |\arg z| \le \varphi\, \} \cup \{0\},
	\quad\text{with $0 < \varphi < \pi/2$.}
\]
In addition, there is a constant~$C>0$ such that $A$ satisfies the 
resolvent estimate $\|(zI-A)^{-1}\| \le C|z|^{-1}$ 
for $z \in \overline{\C\setminus \Sigma_{\varphi}}$, or, equivalently,
\begin{equation}\label{equ:res est}
 \|(zI+A)^{-1}\| \le C|z|^{-1},
	\quad\text{for $z \in \Sigma_{\pi-\varphi}$,}
\end{equation}
where~$\|\cdot\|$ denotes the operator norm induced by the norm
in~$\Hilb$.  

\subsection{Sobolev spaces on the unit sphere}
Denote the inner product in~$\Hilb=L_2(\Sphere^n)$ by
\[
\iprod{v,w}:=\int_{\Sphere^n}vw\,dS,
\]
where $dS$ is the surface measure on the unit sphere, and
denote the measure of the whole sphere by~$\omega_n$
(so, for example, $\omega_2=4\pi$).
Recall~\cite{Mul66} that a spherical harmonic is the restriction
to~$\Sphere^n$ of a homogeneous polynomial~$Y(x)$ in~$\R^{n+1}$
satisfying $\triangle Y(x) = 0$, where $\triangle$ is the Laplacian
operator in $\R^{n+1}$.  The space of spherical
harmonics of degree~$\ell$, denoted by~$\Hilb_\ell$, has 
dimension~$N(n,\ell):=\dim\Hilb_\ell$, given by
\[
N(n,0)=1\quad\text{and}\quad
  N(n,\ell) = \frac{(2\ell+n-1)(\ell+n-2)!}
      {\ell!(n-1)!} \quad\text{for $\ell\ge 1$.}
\]
In the usual way, we construct an orthonormal basis
$\{\,Y_{\ell k}:1\le k\le N(n,\ell)\,\}$ for~$\Hilb_\ell$, so that
$\iprod{Y_{\ell k},Y _{\ell' k'}}=\delta_{\ell\ell'}\delta_{kk'}$.

The Laplace--Beltrami operator~$\LB$ on~$\Sphere^n$ may be defined in
terms of the Laplacian~$\triangle$ on~$\R^{n+1}$ by
\begin{equation}\label{eq: LB}
\LB v=\triangle\check v|_{\Sphere^n}\quad\text{where}\quad
\check v(x)=v(x/|x|).
\end{equation}
The spherical harmonics are eigenfunctions of~$\LB$, satisfying
\[
-\LB Y_{\ell k}=\lambda_\ell Y_{\ell k}
\quad\text{where}\quad\lambda_\ell=\ell(\ell+n-1),
\]
for $1\le k\le N(n,\ell)$ and $\ell\in\{0,1,2,\ldots\}$.  
Every function $v \in L_2(\Sphere^n)$ can be expanded in a generalized
Fourier series
\[
v  = \sum_{\ell=0}^\infty\sum_{k=1}^{N(n,\ell)} 
	\hat v_{\ell k} Y_{\ell k}\quad\text{where}\quad
	\hat v_{\ell k} = \iprod{v,Y_{\ell k}},
\]
and for $\sigma\in\R$ we can characterize the Sobolev space on the unit 
sphere, $H^\sigma=H^\sigma(\Sphere^n)$, in terms of the generalized Fourier 
coefficients: $v\in H^\sigma$ if and only if the norm defined by
\begin{equation}\label{eq: H^sigma norm}
\|v\|^2_{H^\sigma}:=\bigl\|(I-\LB)^{\sigma/2}v\bigr\|^2
	=\sum_{\ell=0}^{\infty}(1+\lambda_\ell)^\sigma
		\sum_{k=1}^{N(n,\ell)}|\hat{v}_{\ell k}|^2
\end{equation}
is finite.  We also define the subspace of functions with mean zero,
\[
H^\sigma_0 = H^\sigma_0(\Sphere^n):= 
  \bigl\{\, v \in H^\sigma(\Sphere^n): \int_{\Sphere^n}v\,dS=0\,\bigr\};
\]
since $Y_{01}=1/\sqrt{\omega_n}$ is constant, we see that 
$v\in H^\sigma$ belongs to~$H^\sigma_0$ if and only if $\hat v_{01}=0$.
\subsection{Positive definite kernels on the unit sphere}
A continuous function $\Phi :\Sphere^n\times\Sphere^n \to \R$ is called a
\emph{positive definite kernel}~\cite{Sch42,XuChe92} 
on~$\Sphere^n$ if it satisfies the following two conditions:
\begin{itemize}
\item[(i)] $\Phi(x,y) = \Phi(y,x)$ for all
$x$, $y\in\Sphere^n$;
\item[(ii)] for any set of distinct scattered points
$\{y_1,y_2,\ldots,y_K\} \subset \Sphere^n$, the symmetric
$K\times K$~matrix $[\Phi(y_i,y_j)]$ is positive semi-definite.
\end{itemize}
We call $\Phi$ \emph{strictly} positive definite if the matrix is strictly
positive definite.

We will work with a kernel~$\Phi$ defined in terms of a univariate 
function~$\phi:[-1,1] \rightarrow \R$ by
\begin{equation}\label{equ:Phi res}
\Phi(x,y)=\phi(x\cdot y)
\quad\text{for all $x$, $y\in\Sphere^n$,}
\end{equation}
where $x\cdot y$ denotes the Euclidean inner product of $x$~and $y$.
Following M\"uller~\cite{Mul66}, let $P_\ell(t)$ denote the Legendre 
polynomial of degree~$\ell$ for~$\R^{n+1}$, and expand $\phi(t)$ in a
Fourier--Legendre series
\begin{equation}\label{equ:defphi}
\phi(t) = \frac{1}{\omega_n}
          \sum_{\ell=0}^\infty
           N(n,\ell)\, a_\ell P_\ell(t).
\end{equation}
Due to the addition formula for spherical harmonics~\cite[Page~10]{Mul66},
\[
\sum_{k=1}^{N(n,\ell)}
    Y_{\ell k}(x) Y_{\ell k}(y) =
         \frac{N(n,\ell)}{\omega_n} P_\ell(x \cdot y),
\]
the kernel $\Phi$ can be represented as
\begin{equation}\label{def_Phi}
  \Phi(x,y) =\sum_{\ell=0}^\infty 
       \sum_{k=1}^{N(n,\ell)}a_\ell Y_{\ell k}(x)Y_{\ell k}(y),
\end{equation}
and since $P_\ell(1)=1$ we find that
\begin{equation}\label{eq: Phi H^tau}
\|\Phi(x,\cdot)\|_{H^\tau}^2=\frac{1}{\omega_n}\sum_{\ell=0}^\infty
	(1+\lambda_\ell)^\tau a_\ell^2 N(n,\ell),
	\quad\text{for all $x\in\Sphere^n$.}
\end{equation}

Chen et al.~\cite{CheMenSun03} proved that
the kernel $\Phi$ is strictly positive definite if and only if
$a_\ell\ge 0$ for all $\ell\ge 0$ and
$a_\ell> 0$ for infinitely many even values of~$\ell$ and
infinitely many odd values of~$\ell$; see also Schoenberg~\cite{Sch42}
and Xu and Cheney~\cite{XuChe92}.
Here, we assume there is a $\tau>n/2$ and positive constants $c$~and $C$
such that
\begin{equation}\label{cond:what}
c(1+\lambda_\ell)^{-\tau}\le a_\ell\le C(1+\lambda_\ell)^{-\tau}, 
\quad \text{for all $\ell\ge 0$.}
\end{equation}
Hence, $\Phi$ is strictly positive definite and, since 
$N(n,\ell)=O(\ell^{n-1})$ as~$\ell\to\infty$, the 
sum~\eqref{eq: Phi H^tau} is finite so, for each 
fixed~$x\in\Sphere^n$, the function~$y\mapsto\Phi(x, y)$ belongs 
to~$H^\tau(\Sphere^n)$. Moreover, this function is continuous
by the Sobolev imbedding theorem.

\section{The discrete problem}\label{sec: discrete}
Choose an angle~$\beta\in(\pi/2,\pi-\varphi)$ and let $\Gamma$ be any curve 
in the interior of the sector $\Sigma_\beta$ which is homotopic to the 
line~$\Gamma_0$ appearing in the Laplace inversion 
formula~\eqref{equ:inv Lap}.  Deforming the contour of 
integration in~\eqref{equ:inv Lap}, we may then write
\begin{equation}\label{equ:u Gamma}
 u(t) = \frac{1}{2\pi i} \int_\Gamma e^{zt} \hat u(z)dz,
\end{equation}
assuming that $\hat f(z)$ is analytic on and to the right of~$\Gamma$.

By taking $f\equiv 0$ in~\eqref{equ:heat}, so that $g(z) = u_0$ in 
\eqref{equ:Lap of heat}, we see that the solution operator for
the homogeneous problem has the integral representation
\begin{equation}\label{equ:calE}
 \E(t) u_0 = \frac{1}{2\pi i} \int_{\Gamma} e^{zt} \hat\E(z) u_0 dz,
\quad\text{where $\hat\E(z)=(zI+A)^{-1}$.}
\end{equation}
For the inhomogeneous case, the inverse Laplace transform of
$\hat\E(z)\hat f(z)$ is the convolution of $\E(t)$ and $f(t)$,
giving the Duhamel formula
\begin{equation}\label{equ:Duhamel}
u(t) =\E(t)u_0 + \int_{0}^t \E(t-s)f(s)ds.
\end{equation}
A standard energy argument shows that $\|\E(t)u_0\|\le\|u_0\|$ for
all $t\ge0$, so the continuous problem~\eqref{equ:heat} is stable in 
the sense that
\[
\|u(t)\| \le \|u_0\|+\int_0^t\|f(s)\|ds,
\quad\text{for $t \ge 0$.}
\]

For our numerical methods we choose $\Gamma$ to be the curve with parametric 
representation
\begin{equation}\label{equ:z xi}
z(\xi):=  \omega+\lambda\bigl(1-\sin(\delta-i\xi)\bigr), 
\quad \mbox{ for } \xi \in \R,
\end{equation}
where the constants $\omega$, $\lambda$ and $\delta$ satisfy
\begin{equation}\label{equ:conds}
\omega>0,\quad \lambda >0 \quad \text{and} \quad 0<\delta<\beta-\pi/2.
\end{equation}
Writing $z=x+iy$, we find that $\Gamma$ is the left branch of the
hyperbola
\begin{equation}\label{eq: hyperbola}
\biggl( \frac{x-\omega-\lambda}{\lambda\sin\delta} \biggr)^2
-\biggl(\frac{y}{\lambda\cos\delta}\biggr)^2 = 1,
\end{equation}
which cuts the real axis at the point $z=\omega+\lambda(1-\sin\delta)$
and has asymptotes $y=\pm(x-\omega-\lambda)\cot \delta$. Thus, the
conditions~\eqref{equ:conds} ensure that $\Gamma$ lies in the
sector~$\Sigma_{\beta}^{\omega} := \omega+\Sigma_{\beta} 
\subset \Sigma_\beta$, and crosses into the left half-plane.

We use \eqref{equ:z xi} in~\eqref{equ:u Gamma} to represent
$u(t)$ as an integral with respect to~$\xi$,
\begin{equation}\label{equ:inv Lap v}
u(t)= \frac{1}{2\pi i}\int_{-\infty}^{\infty}
	e^{z(\xi)t} w(z(\xi)) z'(\xi)\, d\xi.
\end{equation}
Since $|e^{z(\xi)t}|=e^{\Re z(\xi)t}
=e^{\omega t} e^{\lambda t(1-\sin\delta\cosh\xi)}$, the integrand
exhibits a double exponential decay as $|\xi| \to \infty$, for
any fixed~$t>0$.

\subsection{Time discretization}
We choose a quadrature step~$k$, put
\[
\xi_j := jk,\quad z_j:=z(\xi_j), \quad z_j':=z'(\xi_j), 
\]
and apply an equal weight rule to the integral~\eqref{equ:inv Lap v}
to obtain an approximate solution 
\begin{equation}\label{equ:UN}
U_N(t) := \frac{k}{2\pi i} \sum_{j=-N}^N e^{z_j t} \hat u(z_j) z'_j.
\end{equation}
In view of~\eqref{equ:u Gamma}, to compute $U_N(t)$ we must solve 
the $2N+1$~equations
\begin{equation}\label{equ:elliptic}
 (z_j I + A)\hat u(z_j) = g(z_j), \quad \mbox{ for } |j| \le N.
\end{equation}
These equations are independent and hence may be solved in parallel.
Notice that the $\hat u(z_j)$ determine the approximate 
solution~\eqref{equ:UN} 
for all $t>0$ and that the numerical solution \eqref{equ:UN} depends on the 
choice of the curve~$\Gamma$, even though the 
representation~\eqref{equ:u Gamma} does not.  However, we will see that 
a given $\Gamma$~and $k$ yield an accurate 
approximation~$U_N(t) \approx u(t)$ only for~$t$ at a particular time scale.

The parametric representation~\eqref{equ:z xi} of~$\Gamma$ extends to a 
conformal mapping
\begin{equation}\label{equ:conformal}
 z = \Psi(\zeta) = \omega + \lambda\bigl(1 - \sin(\delta - i \zeta)\bigr),
\end{equation}
which, for~$r>0$, transforms the 
strip~$Y_r:= \{ \zeta: |\Im \zeta| \le r \}$ 
onto the set~$S_r:= \{ \Psi(\zeta): \zeta \in Y_r \} \supset \Gamma$.
In fact, $\Psi$ maps the line~$\Im\zeta = \eta$ to the left branch
of a hyperbola given by~\eqref{eq: hyperbola} with~$\delta$ replaced 
by~$\delta+\eta$.  Thus, $S_r$ is bounded by the left branches
of the hyperbolas corresponding to $\Im \zeta = r$~and $\Im \zeta = -r$. 
To ensure that $S_r \subset \Sigma_\beta^\omega$ and that 
$\Re z \to -\infty$ if $|z| \rightarrow \infty$ with~$z \in S_r$, we 
require $ 0 <\delta-r < \delta +r < \beta - \pi/2$, or equivalently that
\begin{equation}\label{cond:r}
0 < r < \min(\delta, \beta - \pi/2 - \delta).
\end{equation}


We introduce the notation
\[
 \|g\|_{X,Z} := \sup_{z \in Z} \|g(z)\|_X, 
	\quad\text{ for $X \subseteq \Hilb$ and $Z \subseteq \C$,}
\]
abbreviated by $\|g\|_Z$ if $X=\Hilb$, and put 
$\lg(s)=\max\bigl(1,\log(1/s)\bigr)$.

\begin{theorem}\label{thm:time dis}
Let $u$ be the solution of~\eqref{equ:heat}, 
with~$\hat f$ bounded and analytic 
in~$\Sigma_{\beta}^\omega$, and fix a time scale~$T>0$.  Let
$0 < \theta <1$ and define $b>0$ by~$\cosh b = 4/(\theta\sin \delta)$, 
let $r$ satisfy \eqref{cond:r} so that 
$\Gamma \subset S_r \subset \Sigma_{\beta}^\omega$, and put
$\lambda = \pi r\theta N/(bT)$. Then
the approximate solution $U_N(t)$ defined by~\eqref{equ:UN} 
with~$k=b/N \le 2\pi r\log 2$ satisfies
\[
\|U_N(t)-u(t)\| \le 
C e^{\omega t} \lg(\rho_r N) e^{-\mu N}
\bigl(\|u_0\| + \| \hat f \|_{\Sigma^\omega_{\beta}}\bigr),
\quad\text{ for $T/2\le t \le2T$,}
\]
where $\mu =2\pi r (1-\theta)/b$, $\rho_r = \pi r\theta \sin(\delta-r)/(2b)$
and $C = C_{\delta,r,\beta}$.
\end{theorem}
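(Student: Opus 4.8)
The plan is to adapt the standard error analysis for the trapezoidal (equal-weight) approximation of an inverse Laplace transform along a hyperbolic contour; see Sheen, Sloan and Thom\'ee~\cite{SheSloTho99} and McLean and Thom\'ee~\cite{McLTho10}. Write the integrand of~\eqref{equ:inv Lap v} as $F(\xi)=e^{z(\xi)t}\hat u(z(\xi))z'(\xi)$, so that $u(t)=(2\pi i)^{-1}\int_{-\infty}^{\infty}F(\xi)\,d\xi$ while $U_N(t)=(2\pi i)^{-1}k\sum_{|j|\le N}F(\xi_j)$. Since $\hat f$ is bounded and analytic in $\Sigma_\beta^\omega$, which by~\eqref{cond:r} contains $S_r\supset\Gamma$, and since the resolvent bound~\eqref{equ:res est} makes $\hat\E(z)=(zI+A)^{-1}$ analytic on $\Sigma_\beta^\omega$ with $\|\hat\E(z)\|\le C|z|^{-1}$, the function $\hat u(z)=\hat\E(z)g(z)$ extends analytically to $S_r$ with $\|\hat u(z)\|\le C|z|^{-1}\bigl(\|u_0\|+\|\hat f\|_{\Sigma_\beta^\omega}\bigr)$. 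Pulling back through the conformal map~\eqref{equ:conformal}, $F(\zeta)=e^{\Psi(\zeta)t}\hat u(\Psi(\zeta))\Psi'(\zeta)$ is analytic in the strip $Y_r$; moreover a direct computation gives $|\Psi'(\zeta)/\Psi(\zeta)|\le C_{\delta,r,\beta}$ for $\zeta\in\overline{Y_r}$ (here $\delta+r<\beta-\pi/2$ keeps $\cos(\delta\pm r)$ and $1-\sin(\delta\pm r)$ bounded away from $0$), so that, writing $\zeta=\xi+i\eta$ and using $\Re\Psi(\xi+i\eta)=\omega+\lambda\bigl(1-\sin(\delta+\eta)\cosh\xi\bigr)$,
\[
\|F(\xi+i\eta)\|\le C_{\delta,r,\beta}\bigl(\|u_0\|+\|\hat f\|_{\Sigma_\beta^\omega}\bigr)\,e^{\omega t}\,e^{\lambda t}\,e^{-\lambda t\sin(\delta+\eta)\cosh\xi}\quad\text{for }|\eta|\le r,
\]
which displays the double exponential decay of $F$ on the closed strip.

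Next I would split $u(t)-U_N(t)=E_1+E_2$ into a quadrature error and a truncation error,
\[
E_1=\frac{1}{2\pi i}\Bigl(\int_{-\infty}^{\infty}F(\xi)\,d\xi-k\sum_{j=-\infty}^{\infty}F(\xi_j)\Bigr),\qquad
E_2=\frac{k}{2\pi i}\sum_{|j|>N}F(\xi_j).
\]
For $E_1$ the key tool is the classical bound for the infinite trapezoidal rule of a function holomorphic in a strip (a Poisson-summation, or residue, argument): $\|E_1\|\le(2\pi)^{-1}\bigl(e^{2\pi r/k}-1\bigr)^{-1}\bigl(M_+(r)+M_-(r)\bigr)$, where $M_\pm(r)=\int_{-\infty}^{\infty}\|F(\xi\pm ir)\|\,d\xi$. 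From the pointwise bound above,
\[
M_\pm(r)\le C\bigl(\|u_0\|+\|\hat f\|_{\Sigma_\beta^\omega}\bigr)\,e^{\omega t}\,e^{\lambda t}\int_{-\infty}^{\infty}e^{-\lambda t\sin(\delta\pm r)\cosh\xi}\,d\xi,
\]
and an elementary estimate of this modified-Bessel-type integral gives $\int_{-\infty}^{\infty}e^{-a\cosh\xi}\,d\xi\le C\lg(a)\,e^{-a}$ for $a>0$ (the logarithmic factor is active only when $a<1$). Because $t\ge T/2$ and $\lambda=\pi r\theta N/(bT)$ force $a:=\lambda t\sin(\delta\pm r)\ge\lambda t\sin(\delta-r)\ge\rho_r N$, so that $\lg(a)\le\lg(\rho_r N)$; because $t\le2T$ gives $\lambda t\le2\pi r\theta N/b$; and because $k=b/N\le2\pi r\log2$ gives $e^{2\pi r/k}-1\ge\tfrac12 e^{2\pi rN/b}$; the surviving exponential factors are at most $2e^{\lambda t-2\pi rN/b}\le2e^{(2\pi rN/b)(\theta-1)}=2e^{-\mu N}$. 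Hence $\|E_1\|\le Ce^{\omega t}\lg(\rho_r N)e^{-\mu N}\bigl(\|u_0\|+\|\hat f\|_{\Sigma_\beta^\omega}\bigr)$.

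For $E_2$ I would compare the tail sum with an integral. On the real axis $\|F(\xi)\|\le C\bigl(\|u_0\|+\|\hat f\|_{\Sigma_\beta^\omega}\bigr)e^{\omega t}e^{\lambda t}e^{-\lambda t\sin\delta\cosh\xi}$, and $\xi\mapsto e^{-\lambda t\sin\delta\cosh\xi}$ is even and decreasing on $[0,\infty)$, so with $Nk=b$,
\[
k\sum_{|j|>N}\|F(\xi_j)\|\le C\bigl(\|u_0\|+\|\hat f\|_{\Sigma_\beta^\omega}\bigr)e^{\omega t}e^{\lambda t}\int_{b}^{\infty}e^{-\lambda t\sin\delta\cosh\xi}\,d\xi.
\]
Using $\cosh\xi-\cosh b\ge(\sinh b)(\xi-b)$ for $\xi\ge b$ gives $\int_b^{\infty}e^{-\lambda t\sin\delta\cosh\xi}\,d\xi\le(\lambda t\sin\delta\sinh b)^{-1}e^{-\lambda t\sin\delta\cosh b}$, and since $\cosh b=4/(\theta\sin\delta)$ this exponent equals $e^{\lambda t(1-4/\theta)}$; a short computation using $\theta<1$ and $\lambda t\ge\lambda T/2=\pi r\theta N/(2b)$ shows $\lambda t(1-4/\theta)\le-\mu N$ (this is exactly why the constant $4$ is built into the definition of $b$), while the prefactor $(\lambda t\sin\delta\sinh b)^{-1}=O(1/N)$ is harmless. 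Thus $\|E_2\|\le Ce^{\omega t}e^{-\mu N}\bigl(\|u_0\|+\|\hat f\|_{\Sigma_\beta^\omega}\bigr)$, and adding the two contributions (using $\lg(\rho_r N)\ge1$) yields the asserted bound.

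The step I expect to require the most care is the strip estimate underlying the bound for $E_1$: one must verify that $F$ really is holomorphic and integrable on the \emph{closed} strip $\overline{Y_r}$ — which is what conditions~\eqref{equ:conds} and~\eqref{cond:r} guarantee, by keeping $S_r$ inside $\Sigma_\beta^\omega$ (where $\hat u$ is holomorphic) and ensuring $\Re z\to-\infty$ as $|z|\to\infty$ with $z\in S_r$ — and one must justify the uniform ratio bound $|\Psi'(\zeta)/\Psi(\zeta)|\le C_{\delta,r,\beta}$ on $\overline{Y_r}$. Everything after that is bookkeeping with the explicit constants $b$, $\lambda$, $\mu$ and $\rho_r$.
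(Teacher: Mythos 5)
The paper does not actually prove this theorem itself --- its ``proof'' is a citation to McLean and Thom\'ee~\cite{McLTho10} --- and your argument is a correct reconstruction of precisely the analysis used there: conformal transplantation via~$\Psi$ to the strip~$Y_r$, the holomorphic-strip error bound for the infinite trapezoidal rule (which produces the $\lg(\rho_r N)\,e^{-\mu N}$ factor once you combine $e^{\lambda t}\le e^{2\pi r\theta N/b}$ with $(e^{2\pi r/k}-1)^{-1}\le 2e^{-2\pi rN/b}$ and $a=\lambda t\sin(\delta\pm r)\ge\rho_r N$), and a monotone tail comparison for the truncation error, where the constant $4$ in $\cosh b=4/(\theta\sin\delta)$ does exactly the job you identify since $\lambda t(4/\theta-1)\ge \pi r(4-\theta)N/(2b)\ge 2\pi r(1-\theta)N/b=\mu N$. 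I see no gaps: the uniform bounds $|\Psi'(\zeta)/\Psi(\zeta)|\le C$ and $\|\hat u(\Psi(\zeta))\|\le C|\Psi(\zeta)|^{-1}(\|u_0\|+\|\hat f\|_{\Sigma^\omega_\beta})$ on $\overline{Y_r}$, which you flag as the delicate step, are indeed guaranteed by \eqref{equ:conds}, \eqref{cond:r} and the resolvent estimate \eqref{equ:res est}.
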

\begin{proof}
See McLean and Thom\'ee~\cite[Theorem~3.1]{McLTho10}.
\end{proof}
\subsection{Galerkin approximation by SRBFs}
Given a suitable set of 
points~$X=\{x_1,x_2,\ldots,x_K\}\subseteq\Sphere^n$ and 
a strictly positive definite kernel~$\Phi(x,y)$, we define
the spherical radial basis functions~$\Phi_p(x):=\Phi(x_p,x)$ 
for~$1\le p\le K$.  Recall that our assumption~\eqref{cond:what} ensures 
$\Phi_p\in H^\tau$ with~$\tau>n/2\ge1$; thus
\[
S_h := \vecspan \{\, \Phi_p: 1\le p\le K\, \}\subseteq H^1.
\]
The uniformity of the set $X$ is measured by its mesh norm~$h_X$ 
and its separation radius~$q_X$, defined by
\[
h=h_X := \sup_{y \in \Sphere^n} 
  \min_{x \in X} \cos^{-1}(y\cdot x)
  \quad\text{and}\quad
q=q_X := \frac{1}{2} \min_{\substack{x \ne y\\ x,y\in X}} \cos^{-1}(y\cdot x).
\]
In words, $h_X$ is the maximum geodesic distance from a point 
on~$\Sphere^n$ to the nearest point of~$X$.  For our convergence
analysis, we require that the family of point sets~$\{X\}$ has a
bounded mesh ratio:
\begin{equation}\label{eq: mesh ratio}
h_X\le C q_X.
\end{equation}

Associated with the second-order, partial differential differential
operator~$A$ is a bounded sesquilinear form~$a:H^1\times H^1\to\C$
defined by
\[
a(u,v)=\iprod{Au,v}\quad\text{for $u$, $v\in H^1$.}
\]
For example, if $A=-\LB$ then $a(u,v)=\iprod{\grad u,\grad v}$ where
$\grad$ is the surface gradient.  The mild solution 
$u:[0,\infty)\to L_2(\Sphere^n)$ of~\eqref{equ:heat} satisfies 
\[
\iprod{\partial_tu,v}+a(u,v)=\iprod{f(t),v}
	\quad\text{for $t>0$ and all $v\in H^1$,}
\]
with $u(0)=u_0$, and we define a semidiscrete 
solution~$u_h:[0,\infty)\to S_h$ of~\eqref{equ:heat} by
\begin{equation}\label{eq: uh weak}
\iprod{\partial_tu_h,\chi}+a(u_h,\chi)=\iprod{f(t),\chi}
	\quad\text{for all $\chi\in S_h$,}
\end{equation}
with $u_h(0)=u_{0h}\approx u_0$ for a suitable~$u_{0h}\in S_h$.

The Laplace transform of~$u$ at~$z_j$ is the weak 
solution~$\hat u(z_j)\in H^1$ of~\eqref{equ:elliptic}, that is,
\[
z_j\iprod{\hat u(z_j),v}+a\bigl(\hat u(z_j),v\bigr)=\iprod{g(z_j),v}
	\quad\text{for all $v\in H^1$,}
\]
and the Laplace transform of the semidiscrete solution,
$\hat u_h(z_j)\in S_h$, satisfies
\begin{equation}\label{eq: Galerkin}
z_j\iprod{\hat u_h(z_j),\chi}+a\bigl(\hat u_h(z_j),\chi\bigr)
	=\iprod{g_h(z_j),\chi}\quad\text{for all $\chi\in S_h$,}
\end{equation}
where $g_h(z)=u_{0h}+P_h\hat f(z)\in S _h$ and $P_h$ denotes the
orthogonal projector from~$L_2(\Sphere^n)$ onto~$S_h$.
Thus, we can view $\hat u_h(z_j)$ as a Galerkin approximation 
to~$\hat u(z_j)$.
Concretely, to compute $\hat u_h(z)=\sum_{p=1}^K\hat U_p(z)\Phi_p$ 
we form the $K \times K$ matrices $B$~and $S$, with entries
\begin{equation}\label{equ:B S}
B_{pq} =  \iprod{\Phi_p,\Phi_q}
\quad\text{and}\quad
S_{pq}  = a(\Phi_p,\Phi_q), 
\end{equation}
form the load vector~$\vecG(z) \in \C^K$ with 
components~$G_p(z) = \iprod{g_h(z),\Phi_p}$, and then solve the 
$K\times K$ complex linear system
\begin{equation}\label{equ:linear system}
 (z_j B + S)\hat\vecU(z_j) = \vecG(z_j),
\end{equation}
to obtain the solution vector $\hat\vecU(z) \in \C^K$ with 
components~$\hat U_p(z)$.
In contrast to finite element mass and stiffness matrices, $B$~and $S$
are not sparse because the SRBFs have large supports.
\subsection{Fully-discrete solution}
Combining the time and space discretizations, we arrive at a fully-discrete
solution
\begin{equation}\label{equ:full}
U_{N,h}(t)=\frac{k}{2\pi i}\sum_{j=-N}^N e^{z_jt}\hat u_h(z_j)z'_j,
\end{equation}
whose evaluation requires that we solve the linear
system~\eqref{equ:linear system} at each of the $2N+1$ quadrature
points~$z_j$.  (In practice, we also use quadratures for the
integrations over~$\Sphere^n$ that are needed to compute $B_{pq}$, $S_{pq}$
and $G_p(z)$, but for our analysis we assume that these quantities
are computed exactly.) The elliptic differential operator~$A$ induces a
discrete operator~$A_h:S_h\to S_h$, defined by
\begin{equation}\label{eq: Ah}
\iprod{A_h\psi,\chi} = a(\psi,\chi), 
	\quad\text{for $\psi$, $\chi \in S_h$,}
\end{equation}
and the Galerkin equations~\eqref{eq: Galerkin} are equivalent to
\begin{equation}\label{eq: uh hat}
(z_jI+A_h)\hat u_h(z_j)=g_h(z_j).
\end{equation}
If we choose $u_{0h}=P_hu_0$ then $g_h(z_j)=P_hg(z_j)$ and by
taking $\Hilb=S_h$ equipped with the $L_2$-norm, we can apply
Theorem~\ref{thm:time dis} to~$A_h$ and deduce that
\begin{equation}\label{eq: h error}
\|U_{N,h}(t)-u_h(t)\| \le 
C e^{\omega t} \lg(\rho_r N) e^{-\mu N}
\bigl(\|u_0\| + \| \hat f \|_{\Sigma^\omega_{\beta}}\bigr),
\quad\text{ for $T/2\le t \le2T$.}
\end{equation}
Since the triangle inequality gives
\begin{equation}\label{eq: triangle ineq}
\|U_{N,h}(t)-u(t)\|\le\|U_{N,h}(t)-u_h(t)\|+\|u_h(t)-u(t)\|,
\end{equation}
to estimate the error in~$U_{N,h}$ it now suffices to estimate
the error in the semidiscrete approximation~$u_h(t)$.
\section{Error analysis of the spatial discretization}\label{sec: error}
We assume now that $A=-\LB$.  Since $\lambda_0=0$ but
$\lambda_\ell\ge\lambda_1=n$ for all $\ell\ge1$, we see that 
$1+\lambda_\ell\le(1+n^{-1})\lambda_\ell$ for all~$\ell\ge1$.  Hence, 
the sesquilinear form~$a$ is coercive on~$H^1_0$, that is,
\begin{equation}\label{eq: a coercive}
a(v,v)\ge \frac{\|v\|_{H^1}^2}{1+n^{-1}}
	\quad\text{if $v\in H^1$ and $\hat v_{10}=\int_{\Sphere^n}v\,dS=0$.}
\end{equation}
Our analysis follows Thom\'ee~\cite[Chapter~3]{Tho97}, with $\LB$
in place of the Laplacian (with homogeneous Dirichlet boundary conditions).
Some technical modifications are needed, however, because $\LB$ has a
zero eigenvalue.
\subsection{Approximation by SRBFs}
We will use the following estimate for the best approximation by
SRBFs.

\begin{theorem}\label{thm: best approx}
Assume that the Fourier--Legendre coefficients in the 
expansion~\eqref{equ:defphi} satisfy \eqref{cond:what} with~$\tau>n/2$, 
so that $S_h\subseteq H^\tau(\Sphere^n)$.  For any real $q$~and $\nu$
satisfying $q\le\nu\le2\tau$ and $q\le\tau$, if $v\in H^\nu$
then there exists $\chi\in S_h$ such that
\[
\|\chi-v\|_{H^q}\le C h_X^{\nu-q}\|v\|_{H^\nu}.
\]
\end{theorem}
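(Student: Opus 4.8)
The plan is to combine two ingredients: a reproducing-kernel-based native space bound on scattered data interpolation error, and a standard duality/$K$-functional argument to move between Sobolev indices. First I would recall that condition~\eqref{cond:what} makes $\Phi$ a reproducing kernel for a native space $\mathcal{N}_\Phi$ whose norm is equivalent to the $H^\tau$-norm; indeed, from \eqref{def_Phi} the native space inner product is $\iprod{v,w}_{\mathcal{N}_\Phi}=\sum_{\ell,k}a_\ell^{-1}\hat v_{\ell k}\hat w_{\ell k}$, and \eqref{cond:what} gives $c'\|v\|_{H^\tau}\le\|v\|_{\mathcal{N}_\Phi}\le C'\|v\|_{H^\tau}$. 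For $v\in H^\tau$ one takes $\chi=I_Xv$, the $\Phi$-interpolant of $v$ on $X$; the classical sampling inequality on $\Sphere^n$ (Narcowich--Ward--Wendland type), valid because of the mesh-ratio assumption~\eqref{eq: mesh ratio}, then yields
\[
\|I_Xv-v\|_{H^q}\le C h_X^{\tau-q}\|v\|_{H^\tau}
\quad\text{for }q\le\tau,
\]
using that the interpolant is the native-space-orthogonal projection of $v$ onto $S_h$, so $\|I_Xv\|_{\mathcal{N}_\Phi}\le\|v\|_{\mathcal{N}_\Phi}$ and hence $\|I_Xv\|_{H^\tau}\le C\|v\|_{H^\tau}$.

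Next I would lift the smoothness index from $\tau$ to $2\tau$ by a duality argument exploiting that $\Phi$-interpolation is $L_2$-"self-adjoint up to native space" in the appropriate sense. The key observation is the super-approximation estimate: for $v\in H^\nu$ with $\tau\le\nu\le2\tau$, writing $e=I_Xv-v$, one estimates $\|e\|_{H^q}$ for $q\le\tau$ by testing against $w\in H^{-q}$. Since $\iprod{e,\psi}=0$ cannot be used directly (interpolation is not $L_2$-orthogonal), instead one uses the native-space orthogonality $\iprod{e,\chi}_{\mathcal{N}_\Phi}=0$ for all $\chi\in S_h$, which translates into $\iprod{(I-\LB)^{-\tau}$-weighted$\;$pairings. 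The clean route is: $\|e\|_{H^q}=\sup_{\|w\|_{H^{-q}}=1}\iprod{e,w}$, pick $\chi_w\in S_h$ with $\|\chi_w-w\|_{H^{-q}}\le Ch_X^{?}$... but this loops. The standard fix (as in Wendland's book, or von Golitschek--Light) is a direct power-of-$h$ bound: the interpolation error operator satisfies $\|I_X-I\|_{H^\nu\to H^q}\le Ch_X^{\nu-q}$ for the full range $q\le\nu\le2\tau$, $q\le\tau$, proved by interpolating between the endpoint estimates $\|I_X-I\|_{H^\tau\to H^q}\le Ch_X^{\tau-q}$ (shown above) and $\|I_X-I\|_{H^{2\tau}\to H^\tau}\le Ch_X^\tau$, the latter obtained by the duality $\iprod{e,z}=\iprod{e,z-I_Xz}_{\text{via kernel}}$ for $z\in H^0$, reducing a $H^\tau$-estimate of $e$ to an $L_2$-estimate of the error in approximating a dual element. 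I would then conclude by operator interpolation (real or complex) between these Sobolev spaces, which is legitimate since the $H^\sigma$ form a scale defined by powers of $I-\LB$ as in~\eqref{eq: H^sigma norm}.

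The main obstacle is establishing the high-smoothness endpoint $\|I_Xv-v\|_{H^\tau}\le Ch_X^\tau\|v\|_{H^{2\tau}}$, i.e. the doubling of the convergence rate by duality. This requires care because $I_X$ is a projection in $\mathcal{N}_\Phi\simeq H^\tau$, not in $L_2$, so the usual Aubin--Nitsche trick does not apply verbatim; one must pair $e=I_Xv-v$ against a dual problem whose solution lies in $H^{2\tau}$ and use the already-established low-order rate on that dual solution, keeping track of the native-space Riesz representation. Once that endpoint is in hand, the interpolation-space argument is routine and the restriction $q\le\nu\le2\tau$, $q\le\tau$ in the statement is exactly the range for which both endpoints and their interpolants make sense.
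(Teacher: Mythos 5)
The paper itself offers no proof of this theorem beyond a citation to Tran et al., so any self-contained argument is necessarily a different route from the paper's; the question is whether yours closes. It does not, for two reasons. First, your construction takes $\chi=I_Xv$, the $\Phi$-interpolant of $v$, which is only defined when $v$ lies in the native space $\mathcal{N}_\Phi\simeq H^\tau$. The theorem, however, is stated for all $\nu$ with $q\le\nu\le2\tau$, and the paper genuinely needs the low-smoothness regime $\nu<\tau$ (Corollary~\ref{cor: I-Ph} is invoked with $0\le\nu\le2\tau$, and Theorem~\ref{thm: nonsmooth}, Part~2, uses $\nu=1$ while $\tau$ is $7/2$ or $9/2$ in the experiments). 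For such $v$ the interpolant does not exist and your argument says nothing. Handling this case --- ``escaping the native space'' --- is precisely the hard content of the cited results of Tran et al.\ and of the Narcowich--Ward--Wendland line of work, where one first replaces $v$ by a band-limited or filtered intermediate approximant and only then brings in native-space machinery. Some idea of that kind is indispensable and is entirely absent from your sketch.

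Second, even on the range $\tau\le\nu\le2\tau$ that you do address, the proof is not complete: you yourself flag the doubled-rate endpoint (the estimate of $I_Xv-v$ in a low norm for $v\in H^{2\tau}$) as ``the main obstacle,'' observe that the naive Aubin--Nitsche pairing ``loops'' because $I_X$ is an orthogonal projection in $\mathcal{N}_\Phi$ rather than in $L_2$, and then defer to the literature without carrying out the corrected duality argument. As written, the operator-interpolation conclusion rests on an endpoint that has been asserted, not established. (A smaller issue: the two endpoints you propose to interpolate between, $H^\tau\to H^q$ and $H^{2\tau}\to H^\tau$, have different target spaces, so they do not directly interpolate to $H^\nu\to H^q$ for fixed $q$; you would want $H^\tau\to H^q$ and $H^{2\tau}\to H^q$.) The ingredients you name are the right ones, but to make this a proof you must either execute the native-space duality argument in detail --- pairing the error against the $\mathcal{N}_\Phi$-Riesz representer of an $L_2$ functional and applying the already-proved first-order rate to that representer --- or, as the paper does, simply cite the references where both this and the escape-the-native-space step are done.
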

\begin{proof}
See Tran et al. \cite[Theorem~3.2]{TranPham2008}~or
\cite[Theorem~3.7 and Remark~5.1]{TranEtAl2009}, and note our
assumption~\eqref{eq: mesh ratio}.
\end{proof}

In the special case~$q=0$, the estimate must hold for~$\chi=P_hv$,
giving the following result.

\begin{corollary}\label{cor: I-Ph}
The $L_2$-projection of~$v$ onto~$S_h$ has the approximation property
\[
\|v-P_hv\|\le Ch_X^\nu\|v\|_{H^\nu}\quad\text{for $0\le\nu\le2\tau$.}
\]
\end{corollary}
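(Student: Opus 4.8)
The plan is to deduce this directly from Theorem~\ref{thm: best approx} by exploiting the optimality of the $L_2$-projection. First I would record that $P_h$, being the orthogonal projector of $L_2(\Sphere^n)$ onto the finite-dimensional subspace~$S_h$, furnishes the best $L_2$-approximation to $v$ from~$S_h$; that is,
\[
\|v-P_hv\|\le\|v-\chi\|\quad\text{for every }\chi\in S_h.
\]
This is just the Pythagorean identity $\|v-\chi\|^2=\|v-P_hv\|^2+\|P_hv-\chi\|^2$ for $\chi\in S_h$.

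Next I would invoke Theorem~\ref{thm: best approx} with the choice $q=0$. The hypotheses to check are $q\le\nu\le2\tau$ and $q\le\tau$; with $q=0$ the first reduces to $0\le\nu\le2\tau$, which is exactly the range assumed in the corollary, and the second reduces to $0\le\tau$, which holds since $\tau>n/2>0$. The theorem then produces some $\chi\in S_h$ with $\|\chi-v\|_{H^0}=\|\chi-v\|\le Ch_X^{\nu-0}\|v\|_{H^\nu}=Ch_X^{\nu}\|v\|_{H^\nu}$. Combining this with the best-approximation inequality for $P_h$ gives $\|v-P_hv\|\le\|v-\chi\|\le Ch_X^{\nu}\|v\|_{H^\nu}$, as claimed.

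There is essentially no obstacle here: the result is an immediate consequence of Theorem~\ref{thm: best approx} together with the variational characterization of $P_h$, and the only point requiring a word of attention is that $q=0$ is an admissible exponent in that theorem for the stated range of~$\nu$.
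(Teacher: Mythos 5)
Your argument is correct and is exactly the paper's: the paper likewise notes that in the case $q=0$ the bound of Theorem~\ref{thm: best approx} must hold for $\chi=P_hv$, which is precisely the best-approximation property of the orthogonal projector that you invoke. Nothing is missing.
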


For our error analysis, we also use the Ritz 
projector~$R_h:H^1(\Sphere^n)\to S_h$ determined by the
sesquilinear form
\[
a_1(u,v)=a(u,v)+\iprod{u,v}\quad\text{for $u$, $v\in H^1$.}
\]
We see from~\eqref{eq: a coercive} that $a_1$ is coercive on~$H^1$;
in fact, $a_1(v,v)=\|v\|_{H^1}^2$.  Thus, $R_hv\in S_h$ is well-defined by
\begin{equation}\label{eq: Rh}
a_1(R_hv,\chi)=a_1(v,\chi)\quad\text{for all $\chi\in S_h$,}
\end{equation}
and the following error estimates hold using standard arguments.

\begin{theorem}\label{thm: Rh error}
If $v\in H^\nu$ and $1\le\nu\le2\tau$, then
\[
\|v-R_hv\|_{H^1}=\inf_{\chi\in S_h}\|v-\chi\|_{H^1}
	\le Ch_X^{\nu-1}\|v\|_{H^\nu}
\]
and
\[
\|v-R_hv\|\le Ch_X^\nu\|v\|_{H^\nu}.
\]
\end{theorem}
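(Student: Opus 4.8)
The plan is to recognize $R_h$ as an \emph{orthogonal} projector in disguise. The norm characterization~\eqref{eq: H^sigma norm}, together with $A=-\LB$ and the definitions of $a$ and $a_1$, shows that $a_1(u,v)=\iprod{(I-\LB)^{1/2}u,(I-\LB)^{1/2}v}$ for $u$, $v\in H^1$; that is, $a_1$ is precisely the inner product of $H^1=H^1(\Sphere^n)$ (the polarized form of the identity $a_1(v,v)=\|v\|_{H^1}^2$ noted after~\eqref{eq: a coercive}). Hence the defining relation~\eqref{eq: Rh} says that $R_hv$ is the $H^1$-orthogonal projection of $v$ onto the finite-dimensional (so closed) subspace $S_h$, and the projection theorem in Hilbert space gives the identity $\|v-R_hv\|_{H^1}=\inf_{\chi\in S_h}\|v-\chi\|_{H^1}$ at once. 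The first bound then follows by selecting, via Theorem~\ref{thm: best approx} with $q=1$ (legitimate since $1\le\nu\le2\tau$ and $1\le\tau$, the latter because $\tau>n/2\ge1$), an element $\chi\in S_h$ with $\|v-\chi\|_{H^1}\le Ch_X^{\nu-1}\|v\|_{H^\nu}$.

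For the $L_2$ bound I would run the Aubin--Nitsche duality argument. Put $e=v-R_hv$ and note the Galerkin orthogonality $a_1(e,\chi)=0$ for all $\chi\in S_h$, inherited from~\eqref{eq: Rh}. Introduce the solution $w\in H^1$ of the dual problem $a_1(\psi,w)=\iprod{\psi,e}$ for all $\psi\in H^1$; since $a_1$ is the $H^1$ inner product this is equivalent to $(I-\LB)w=e$, and the spectral norm characterization~\eqref{eq: H^sigma norm} gives the \emph{exact} elliptic-regularity estimate $\|w\|_{H^2}=\|e\|$. Taking $\psi=e$ and subtracting an arbitrary $\chi\in S_h$,
\[
\|e\|^2=\iprod{e,e}=a_1(e,w)=a_1(e,w-\chi)\le\|e\|_{H^1}\,\|w-\chi\|_{H^1}
\]
by Cauchy--Schwarz for the inner product $a_1$. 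Applying Theorem~\ref{thm: best approx} again with $q=1$, $\nu=2$ (valid since $\tau\ge1$) to bound $\inf_\chi\|w-\chi\|_{H^1}\le Ch_X\|w\|_{H^2}=Ch_X\|e\|$ and then dividing by $\|e\|$ yields $\|e\|\le Ch_X\|e\|_{H^1}$. Combined with the first bound this gives $\|v-R_hv\|\le Ch_X\cdot Ch_X^{\nu-1}\|v\|_{H^\nu}=Ch_X^\nu\|v\|_{H^\nu}$.

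There is no genuinely hard step: this is the standard C\'ea/Nitsche pair. The only points needing care are (i) observing that adding the $\iprod{u,v}$ term makes $a_1$ coercive — indeed isometric to the $H^1$ inner product — on \emph{all} of $H^1$, so the zero eigenvalue of $\LB$ that forces the restriction in~\eqref{eq: a coercive} causes no difficulty for $R_h$; and (ii) checking that the index constraints $q\le\nu\le2\tau$ and $q\le\tau$ of Theorem~\ref{thm: best approx} hold in both invocations, which they do since $\tau>n/2\ge1$ and $\nu\le2\tau$. The elliptic regularity $\|w\|_{H^2}\le C\|e\|$ that would carry the weight of the argument on a general domain is here a triviality, being built into the spectral definition~\eqref{eq: H^sigma norm} of the Sobolev norms.
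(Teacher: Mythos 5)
Your proposal is correct and follows essentially the same route as the paper: the best-approximation identity for $R_h$ in the $H^1$ norm (the paper derives it via Cauchy--Schwarz from $a_1(v,v)=\|v\|_{H^1}^2$, you invoke the projection theorem, which is the same observation), followed by the Aubin--Nitsche duality argument with the dual problem $(I-\LB)w=e$ and the exact regularity $\|w\|_{H^2}=\|e\|$ built into the spectral norm~\eqref{eq: H^sigma norm}. The only cosmetic difference is that you factor the duality step as $\|e\|\le Ch_X\|e\|_{H^1}$ before inserting the $H^1$ bound, whereas the paper chains the two estimates in a single inequality.
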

\begin{proof}
The definition~\eqref{eq: Rh} immediately implies the 
orthogonality property
\begin{equation}\label{eq: Rh orthog}
a_1(v-R_hv,\chi)=0\quad\text{for all $\chi\in S_h$,}
\end{equation}
so, because $a_1(v,v)=\|v\|_{H^1}^2$,
\begin{align*}
\|v-R_hv\|_{H^1}^2&=a_1(v-R_hv,v-R_hv)=a_1(v-R_hv,v-\chi)\\
	&\le\|v-R_hv\|_{H^1}\|v-\chi\|_{H^1},
\end{align*}
and thus $\|v-R_h v\|_{H^1}\le \|v-\chi\|_{H^1}$ for all $\chi\in S_h$.
The first claim now follows by Theorem~\ref{thm: best approx}.

A duality argument~\cite{NitSch74} yields the second claim.
Given~$v$ there is a unique~$u\in H^1$ satisfying
$(I+A)u=v-R_hv$, or equivalently (since $A$ is self-adjoint)
\[
a_1(w,u)=\iprod{w,v-R_hv}\quad\text{for all $w\in H^1$,}
\]
Taking $w=v-R_hv$ and applying \eqref{eq: Rh orthog}, 
we have for every~$\chi\in S_h$,
\begin{align*}
\iprod{v-R_hv,v-R_hv}&=a_1(v-R_hv,u)=a_1(v-R_hv,u-\chi)\\
	&\le \|v-R_hv\|_{H^1}\|u-\chi\|_{H^1}
	\le Ch^{\nu-1}\|v\|_{H^\nu}\|u-\chi\|_{H^1}.
\end{align*}
By Theorem~\ref{thm: best approx} with $q=1$ and 
$\nu=2\le2\tau$, there is a $\chi\in S_h$ such that
$\|u-\chi\|_{H^1}\le Ch\|u\|_{H^2}$, so 
\[
\|v-R_hv\|^2\le Ch^\nu\|v\|_{H^\nu}\|u\|_{H^2},
\]
and the result follows because $\|u\|_{H^2}=\|(I+A)u\|=\|v-R_hv\|$.
\end{proof}

\subsection{Contour integral estimate}
We see from \eqref{equ:Lap of heat}~and \eqref{eq: uh hat} that,
assuming $u_{0h}=P_hu_0$,
\[
\hat u(z)=(zI+A)^{-1}g(z)\quad\text{and}\quad
\hat u_h(z)=(zI+A_h)^{-1}P_hg(z),
\]
so
\[
\hat u_h(z)-\hat u(z)=G_h(z)g(z)
	\quad\text{where}\quad
G_h(z):=(zI+A_h)^{-1}P_h-(zI+A)^{-1}.
\]
Deforming the integration contour in the Laplace inversion formula
to~$\Gamma=\partial\Sigma^\omega_\beta$, we can represent the error
in the semidiscrete solution as follows:
\begin{equation}\label{eq: uh-u integral}
u_h(t)-u(t)=\frac{1}{2\pi i}\int_{\Gamma}e^{zt}G_h(z)g(z)\,dz.
\end{equation}
The next lemma allows us to estimate this integral.

\begin{lemma}\label{lem:Ghv}
If $0\le\nu\le2\tau$, then
\[
\|G_h(z) v\|\le C h_X^\nu \|v\|_{H^{\nu-2}}, 
\quad\text{for $z \in \Sigma_\beta^\omega$ and $v\in H^{\nu-2}$.}
\]
\end{lemma}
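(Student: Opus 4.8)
The plan is to split the operator $G_h(z)$ into two pieces using the Ritz projector as an intermediary, writing
\[
G_h(z)v = \bigl[(zI+A_h)^{-1}P_h - (zI+A_h)^{-1}A_h R_h (zI+A)^{-1}\bigr]v
        + \bigl[(zI+A_h)^{-1}A_h R_h - I\bigr](zI+A)^{-1}v ,
\]
or some convenient variant of this. More precisely, I would set $w=(zI+A)^{-1}v$ and $w_h=(zI+A_h)^{-1}P_hv$, so that $G_h(z)v = w_h-w$, and compare both with the Ritz projection $R_hw\in S_h$. The standard trick (following Thom\'ee) is to derive an error equation for $\theta_h:=w_h-R_hw\in S_h$. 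Testing the Galerkin equation for $w_h$ against $\chi\in S_h$ and using the defining property of $R_h$, one gets
\[
z\iprod{\theta_h,\chi} + a(\theta_h,\chi) = \iprod{v,\chi} - z\iprod{R_hw,\chi} - a(R_hw,\chi)
 = z\iprod{w-R_hw,\chi} + \iprod{R_hw-w,\chi},
\]
where the last step uses $\iprod{v,\chi}=z\iprod{w,\chi}+a(w,\chi)$ and $a_1(R_hw,\chi)=a_1(w,\chi)$. Hence $\theta_h$ solves, in $S_h$, the equation $(zI+A_h)\theta_h = (z-1)P_h(w-R_hw)$, giving $\theta_h = (z-1)(zI+A_h)^{-1}P_h(w-R_hw)$.

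Next I would invoke the resolvent estimate. The discrete operator $A_h$ is self-adjoint and positive-semidefinite on $S_h$ with spectrum in $[0,\infty)$, so $\|(zI+A_h)^{-1}\|\le C|z|^{-1}$ for $z\in\Sigma_{\pi-\varphi}\supset\Sigma_\beta^\omega$ exactly as in \eqref{equ:res est} — this holds uniformly in $h$ because the sector and constant depend only on the numerical range, not the dimension. Therefore $\|\theta_h\|\le C\,|z-1|\,|z|^{-1}\,\|w-R_hw\|$. For $z\in\Sigma_\beta^\omega$ we have $|z|\ge\omega\sin(\pi-\beta)$ bounded below and $|z-1|\le|z|+1\le C|z|$, so the prefactor $|z-1|/|z|$ is bounded by a constant $C=C_{\omega,\beta}$. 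Thus $\|\theta_h\|\le C\|w-R_hw\|$, and by the triangle inequality
\[
\|G_h(z)v\| = \|w_h - w\| \le \|\theta_h\| + \|R_hw - w\| \le C\|w-R_hw\|.
\]

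The remaining step is to bound $\|w-R_hw\|$ using Theorem~\ref{thm: Rh error}, which requires controlling a Sobolev norm of $w=(zI+A)^{-1}v$. Here the natural index is $\nu$: if $v\in H^{\nu-2}$ then I want $w\in H^\nu$ with $\|w\|_{H^\nu}\le C|z|^{\cdot}\|v\|_{H^{\nu-2}}$, after which $\|w-R_hw\|\le Ch_X^\nu\|w\|_{H^\nu}$ finishes the proof (for $\nu\ge1$; the range $0\le\nu<1$ will need separate, easier handling, e.g. via Corollary~\ref{cor: I-Ph} applied to $w\in H^\nu$ together with an interpolation/elliptic-regularity bound). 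The key elliptic-regularity fact is the spectral estimate
\[
\|(zI+A)^{-1}v\|_{H^\nu}^2 = \sum_{\ell,k}\frac{(1+\lambda_\ell)^\nu}{|z+\lambda_\ell|^2}|\hat v_{\ell k}|^2 ,
\]
and one checks, using $z\in\Sigma_\beta^\omega$ (so $|z+\lambda_\ell|\ge c(|z|+\lambda_\ell)\ge c(1+\lambda_\ell)$ after absorbing the $+1$ via $|z|\ge\omega\sin(\pi-\beta)>0$), that $(1+\lambda_\ell)^\nu/|z+\lambda_\ell|^2 \le C (1+\lambda_\ell)^{\nu-2}$ uniformly in $z\in\Sigma_\beta^\omega$ and $\ell$. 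This yields $\|w\|_{H^\nu}\le C\|v\|_{H^{\nu-2}}$ with $C=C_{\omega,\beta}$, independent of $z$, and combining the three displayed bounds gives $\|G_h(z)v\|\le Ch_X^\nu\|v\|_{H^{\nu-2}}$ as claimed.

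I expect the main technical care — rather than a genuine obstacle — to be twofold: first, verifying that the resolvent bound for $A_h$ holds uniformly in $h$ with the \emph{same} sector $\Sigma_\beta$, which follows because the numerical range of $A_h$ is contained in $[0,\infty)$ (a consequence of $a(\cdot,\cdot)$ being symmetric and positive-semidefinite on $S_h\subset H^1$); and second, handling the zero eigenvalue $\lambda_0=0$ of $\LB$ — this is why the bound is stated with $|z|$ bounded below by $\omega\sin(\pi-\beta)>0$ for $z\in\Sigma_\beta^\omega$ rather than by a power of $|z|$ alone, and why the Ritz projector is defined via the shifted form $a_1=a+\iprod{\cdot,\cdot}$ rather than $a$ itself. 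Keeping the argument clean across the full range $0\le\nu\le2\tau$ (in particular splicing together the energy-type argument for $\nu\ge1$ with a direct $L_2$-projection argument for $\nu<1$) will require a little bookkeeping but no new ideas.
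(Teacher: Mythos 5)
Your proposal is correct and follows essentially the same route as the paper: your error equation $(zI+A_h)\theta_h=(z-1)P_h(w-R_hw)$ for $\theta_h=w_h-R_hw$ is the weak-form version of the paper's operator identity $P_h(I+A)=(I+A_h)R_h$, and both arguments then combine the resolvent bound (with $|1-z||z|^{-1}$ bounded on $\Sigma_\beta^\omega$), the Ritz-projection error of Theorem~\ref{thm: Rh error}, and the regularity estimate $\|\hat\E(z)v\|_{H^\nu}\le C\|v\|_{H^{\nu-2}}$. The only cosmetic difference is that the paper splits off $(P_h-I)\hat\E(z)$ and centres the remainder at $P_hw$ rather than $R_hw$; your remark about the range $0\le\nu<1$ is a fair point, but the paper's own proof glosses over the same issue.
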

\begin{proof}
Recall that $\hat\E(z):=(zI+A)^{-1}$, and let $\hat\E_h(z):=(zI+A_h)^{-1}$. 
We split $G_h(z)$ into two terms,
\begin{equation}\label{eq: Gh split}
G_h(z)=(P_h-I)\hat\E(z)+\bigl[\hat\E_h(z)P_h-P_h\hat\E(z)\bigr].
\end{equation}
Since $A\hat\E(z)=(zI+A-zI)(zI+A)^{-1}=I-z(zI+A)^{-1}$, the resolvent
estimate~\eqref{equ:res est} shows that 
\[
\|A\hat\E(z)v\|\le C\|v\|\quad\text{for $z\in\Sigma^\omega_\beta$.}
\]
Moreover, since $(I+A)\hat\E(z)=I+(1-z)\hat\E(z)$ and since $(I+A)^{1/2}$ 
commutes with~$(I+A)\hat\E(z)$, we have 
$\|(I+A)\hat\E(z)v\|_{H^q}\le C|1-z||z|^{-1}\|v\|_{H^q}$ for any $q\in\R$,
and thus by Corollary~\ref{cor: I-Ph},
\[
\|(P_h-I)\hat\E(z)v\|\le Ch^\nu\|\hat\E(z)v\|_{H^{\nu}}
	=Ch^\nu\|(I+A)\hat\E(z)v\|_{H^{\nu-2}}
	\le Ch^\nu\|v\|_{H^{\nu-2}},
\]
noting that $|1-z||z|^{-1}\le C_{\omega,\beta}$ for $z\in\Sigma^\omega_\beta$.

To estimate the second term in~\eqref{eq: Gh split}, we write
\begin{align*}
\hat\E_h(z)P_h-P_h\hat\E(z)&=\hat\E_h(z)P_h(zI+A)\hat\E(z)
	-\hat\E_h(z)(zI+A_h)P_h\hat\E(z)\\
	&=\hat\E_h(z)[P_hA-A_hP_h]\hat\E(z).
\end{align*}
For all $u$, $w\in H^1$,
\begin{align*}
\iprod{P_h(I+A)u,w}&=\iprod{(I+A)u,P_hw}=a_1(u,P_hw)=a_1(R_hu,P_hw)\\
	&=\iprod{(I+A_h)R_hu,P_hw}=\iprod{(I+A_h)R_hu,w},
\end{align*}
so $P_h(I+A)=(I+A_h)R_h$ and thus 
\[
\hat\E_h(z)P_h-P_h\hat\E(z)=\hat\E_h(z)(I+A_h)P_h(R_h-I)\hat\E(z).
\]
Since $\hat\E_h(z)(I+A_h)=I+(1-z)\hat\E_h(z)$ the resolvent 
estimate~\eqref{equ:res est} and Theorem~\ref{thm: Rh error}
imply that
\begin{multline*}
\bigl\|\bigl[\hat\E_h(z)P_h-P_h\hat\E(z)\bigr]v\bigr\|
	\le\bigl(1+C|1-z||z|^{-1}\bigr)
	\|(R_h-I)\hat\E(z)v\|\\
	\le Ch^\nu\|\hat\E(z)v\|_{H^\nu}
	= Ch^\nu\|(I+A)\hat\E(z)v\|_{H^{\nu-2}}
	\le Ch^\nu\|v\|_{H^{\nu-2}},
\end{multline*}
noting again that $|1-z||z|^{-1}\le C_{\omega,\beta}$ 
for~$z\in\Sigma^\omega_\beta$.
\end{proof}

\begin{theorem}\label{thm:space dis}
Let $u$ be the solution of~\eqref{equ:heat} and let $u_h$ be the
semidiscrete approximation given by~\eqref{eq: uh weak}.
If $0\le\nu\le2\tau$, then
\[
\|u_h(t) - u(t)\|\le C h^\nu_X t^{-1}e^{\omega t}\bigl(
	\|u_0\|_{H^{\nu-2}}
	+\|\hat f\|_{H^{\nu-2},\partial\Sigma_{\beta}^{\omega}}
	\bigr), \quad \text{for $t>0$.}
\] 
\end{theorem}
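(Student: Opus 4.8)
The plan is to start from the contour‐integral representation~\eqref{eq: uh-u integral} of the semidiscrete error and bound it directly using Lemma~\ref{lem:Ghv}. Writing $g(z)=u_0+\hat f(z)$ and recalling that on the contour $\Gamma=\partial\Sigma_\beta^\omega$ we have $|e^{zt}|=e^{\Re z\,t}\le e^{\omega t}e^{-c|z|t}$ for some $c=c_\beta>0$ (since $\Re z=\omega+|z-\omega|\cos(\arg(z-\omega))$ and $|\arg(z-\omega)|=\beta>\pi/2$ on the two rays forming $\Gamma$), we get
\[
\|u_h(t)-u(t)\|\le\frac{1}{2\pi}\int_\Gamma e^{\omega t}e^{-c|z|t}\,\|G_h(z)g(z)\|\,|dz|.
\]
By Lemma~\ref{lem:Ghv}, $\|G_h(z)g(z)\|\le Ch_X^\nu\|g(z)\|_{H^{\nu-2}}\le Ch_X^\nu\bigl(\|u_0\|_{H^{\nu-2}}+\|\hat f\|_{H^{\nu-2},\partial\Sigma_\beta^\omega}\bigr)$, uniformly in $z\in\Gamma$. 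The key point is that this bound is \emph{uniform} in $z$, so it pulls out of the integral, leaving
\[
\|u_h(t)-u(t)\|\le Ch_X^\nu e^{\omega t}\bigl(\|u_0\|_{H^{\nu-2}}+\|\hat f\|_{H^{\nu-2},\partial\Sigma_\beta^\omega}\bigr)\cdot\frac{1}{2\pi}\int_\Gamma e^{-c|z|t}\,|dz|.
\]

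The remaining step is to show the scalar integral $\int_\Gamma e^{-c|z|t}\,|dz|$ is $O(t^{-1})$. Parametrizing $\Gamma$ by arclength (or by the natural parameter along the two rays $z=\omega+\rho e^{\pm i\beta}$, $\rho\ge0$), we have $|z|\ge\rho$ up to constants and $|dz|=d\rho$ on each ray, so $\int_\Gamma e^{-c|z|t}|dz|\le 2\int_0^\infty e^{-c\rho t}\,d\rho+(\text{bounded piece near }z=\omega)\le C/t$ for all $t>0$; the factor near the vertex contributes at most a constant times $e^{-c\omega t}$, which is absorbed. This yields exactly the claimed bound $\|u_h(t)-u(t)\|\le Ch_X^\nu t^{-1}e^{\omega t}\bigl(\|u_0\|_{H^{\nu-2}}+\|\hat f\|_{H^{\nu-2},\partial\Sigma_\beta^\omega}\bigr)$.

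One subtlety to check is that the contour deformation leading to~\eqref{eq: uh-u integral} is legitimate: we need $G_h(z)g(z)$ to be analytic and to decay sufficiently on and to the right of $\Gamma$ so that the inversion integral over $\Gamma_0$ can be moved to $\Gamma$. Analyticity of $z\mapsto G_h(z)$ on $\Sigma_\beta^\omega$ follows from the resolvent identities used in Lemma~\ref{lem:Ghv} (both $(zI+A)^{-1}$ and $(zI+A_h)^{-1}$ are analytic there since the spectra of $A$ and $A_h$ lie in $[0,\infty)$), and $\hat f$ is assumed analytic on $\Sigma_\beta^\omega$; the decay in $|z|$ needed to justify closing the contour at infinity comes from the extra $|z|^{-1}$ in the resolvent estimate~\eqref{equ:res est} combined with $e^{zt}$ decaying along $\Gamma$. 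I would state this justification briefly rather than belabor it, since it parallels the standard argument in Thom\'ee~\cite{Tho97}.

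The main obstacle, such as it is, is purely bookkeeping: making sure the uniform-in-$z$ bound from Lemma~\ref{lem:Ghv} is applied with the right Sobolev index (note the $H^{\nu-2}$ on the right, matching $g(z)\in H^{\nu-2}$ when $u_0\in H^{\nu-2}$ and $\hat f(z)\in H^{\nu-2}$) and that the constant $c_\beta$ governing $\Re z\le\omega-c_\beta|z-\omega|$ on $\Gamma$ is genuinely positive, which is exactly what $\beta\in(\pi/2,\pi-\varphi)$ guarantees. There is no hard analytic estimate here — the work was already done in Lemma~\ref{lem:Ghv} — so the proof is short.
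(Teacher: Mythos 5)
Your proposal is correct and follows essentially the same route as the paper: represent the error by the contour integral~\eqref{eq: uh-u integral}, pull the uniform bound of Lemma~\ref{lem:Ghv} out of the integral, and integrate $e^{\Re z\,t}=e^{\omega t}e^{-cst}$ along the two rays $z=\omega+se^{\pm i\beta}$ to produce the factor $t^{-1}$. The only cosmetic slip is writing the decay as $e^{-c|z|t}$ rather than $e^{-c|z-\omega|t}$ (the former fails near the vertex $z=\omega$), but since your actual computation parametrizes by the ray parameter $\rho=|z-\omega|$ this does not affect the result.
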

\begin{proof}
Let $\Gamma_\pm$ be the half-line~$z=\omega+se^{\pm i\beta}$
for~$0<s<\infty$, so that $\Gamma=\Gamma_+-\Gamma_-$.  Since 
$\Re z=\omega-cs$ where~$c=-\cos\beta>0$, by applying 
Lemma~\ref{lem:Ghv} we have
\begin{align*}
\biggl\|\int_{\Gamma_\pm}e^{zt}G_h(z)g(z)\,dz\biggr\|
	&\le\int_0^\infty e^{(\omega-cs)t}\|G_h(z)g(z)\|\,ds\\
	&\le Ce^{\omega t}h^\nu\|g\|_{H^{\nu-2},\Gamma}\int_0^\infty
	e^{-cst}\,ds,
\end{align*}
and the error bound follows at once from the integral 
representation~\eqref{eq: uh-u integral}.
\end{proof}

Combining Theorems \ref{thm:time dis}~and \ref{thm:space dis}, we conclude
that provided $u_0$ and $f$ have the appropriate spatial regularity,
\begin{equation}\label{eq: U_N,h error}
\|U_{N,h}(t)-u(t)\|=O\bigl(\lg(\rho_rN)e^{-\mu N}+h_X^{2\tau}\bigr)
	\quad\text{for $T/2\le t\le 2T$,}
\end{equation}
where the constant includes a factor~$(1+T^{-1})e^{2\omega T}$.
Moreover, in the next section (Theorem~\ref{thm: nonsmooth}, Part~2)
we will see that when~$f\equiv0$ the error bound~\eqref{eq: U_N,h error}
remains valid even if the initial data is not regular.
\subsection{Nonsmooth initial data}
Consider the case $f\equiv0$, that is,
\begin{equation}\label{equ:homo}
\partial_tu-\LB u=0\quad\text{on $\Sphere^n$ for $t>0$,}
	\quad\text{with $u=u_0$ when $t=0$,}
\end{equation}
and the corresponding semidiscrete problem in which $u_h:[0,\infty)\to S_h$
satisfies
\begin{equation}\label{eq: semi homo}
\partial_tu_h-\LB_h u_h=0\quad\text{on $\Sphere^n$ for $t>0$,}
	\quad\text{with $u=u_{0h}$ when $t=0$,}
\end{equation}
where $\LB_h:S_h\to S_h$ is defined by
\[
\iprod{-\LB_h\psi,\chi}=a(\psi,\chi)=\iprod{\grad\psi,\grad\chi}
	\quad\text{for all $\psi$, $\chi\in S_h$;}
\]
compare with~\eqref{eq: Ah}.  In contrast to the forgoing analysis,
we now permit the initial data~$u_0$ to be an arbitrary function
in~$L_2(\Sphere^n)$.

By separating variables, we obtain an expansion in spherical harmonics,
\begin{equation}\label{eq: E(t)u0}
u(t) = \E(t)u_0 = 
\sum_{\ell=0}^\infty\sum_{k=1}^{N(n,\ell)} e^{-\lambda_\ell t} 
	\widehat{(u_0)}_{\ell,k} Y_{\ell k},
\end{equation}
that implies the smoothing property in the next theorem.

\begin{theorem}\label{thm: regularity}
Let $0\le q\le\nu$ and $m\in\{0,1,2,\ldots\}$.  If $u_0\in H^s$ then
$\E(t)u_0\in H^\nu$ and
\[
\|\partial^m_t \E(t) v\|_{H^\nu} \le C_T t^{-(\nu-q)/2-m}\|v\|_{H^q}, 
	\quad\text{for $0<t\le T$.}
\]
\end{theorem}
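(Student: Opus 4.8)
The plan is to work directly from the spectral expansion~\eqref{eq: E(t)u0}. For each fixed $t>0$ the factors $e^{-\lambda_\ell t}$ decay super-polynomially in~$\ell$ (recall $N(n,\ell)=O(\ell^{n-1})$), so the series for $\E(t)v$ converges in every Sobolev space $H^\nu$ and may be differentiated term by term with respect to~$t$, giving
\[
\partial_t^m\E(t)v=\sum_{\ell=0}^\infty\sum_{k=1}^{N(n,\ell)}(-\lambda_\ell)^m e^{-\lambda_\ell t}\,\hat v_{\ell k}\,Y_{\ell k}.
\]
By the norm characterization~\eqref{eq: H^sigma norm},
\[
\|\partial_t^m\E(t)v\|_{H^\nu}^2
 =\sum_{\ell=0}^\infty\sum_{k=1}^{N(n,\ell)}(1+\lambda_\ell)^\nu\lambda_\ell^{2m}e^{-2\lambda_\ell t}\,|\hat v_{\ell k}|^2,
\]
and since $\|v\|_{H^q}^2=\sum_{\ell,k}(1+\lambda_\ell)^q|\hat v_{\ell k}|^2$, it suffices to establish the scalar bound
\[
(1+\lambda_\ell)^{\nu-q}\lambda_\ell^{2m}e^{-2\lambda_\ell t}\le C_T^2\,t^{-(\nu-q)-2m}
\qquad(\ell\ge0,\ 0<t\le T),
\]
uniformly in~$\ell$, and then sum the weighted coefficients.

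For $\ell=0$ we have $\lambda_0=0$: the left-hand side vanishes when $m\ge1$ and equals~$1$ when $m=0$, so the bound holds as soon as $C_T^2\ge T^{\,\nu-q}$ (using $t\le T$ and $\nu-q\ge0$). For $\ell\ge1$ I would use $\lambda_\ell\ge\lambda_1=n\ge1$, hence $1+\lambda_\ell\le2\lambda_\ell$, to reduce matters to bounding $\lambda_\ell^{\alpha}e^{-2\lambda_\ell t}$ with $\alpha:=\nu-q+2m\ge0$. Substituting $x=2\lambda_\ell t$ and using the elementary identity $\sup_{x>0}x^\alpha e^{-x}=(\alpha/e)^\alpha$ (read as $1$ when $\alpha=0$) gives $\lambda_\ell^\alpha e^{-2\lambda_\ell t}=(2t)^{-\alpha}x^\alpha e^{-x}\le(2t)^{-\alpha}(\alpha/e)^\alpha$, which is of the form $C\,t^{-(\nu-q)-2m}$ with $C$ depending only on $\nu-q$ and~$m$. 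Taking $C_T=\max\bigl(C,T^{(\nu-q)/2}\bigr)$ then covers both cases.

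Summing the scalar bound against $|\hat v_{\ell k}|^2$ yields $\|\partial_t^m\E(t)v\|_{H^\nu}^2\le C_T^2\,t^{-(\nu-q)-2m}\|v\|_{H^q}^2$, and taking square roots gives the claimed estimate (and in particular $\E(t)v\in H^\nu$ for every $t>0$). There is no genuinely hard step here: the argument is a direct computation with the eigenfunction expansion. The only point that needs care — and the reason the constant must depend on~$T$, unlike in Thom\'ee's treatment of the Dirichlet Laplacian~\cite{Tho97} — is the constant mode $\ell=0$, where the smoothing factor $e^{-\lambda_\ell t}$ contributes nothing and one simply exploits $0<t\le T$.
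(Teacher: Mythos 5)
Your proof is correct and follows essentially the same route as the paper: expand in spherical harmonics, differentiate term by term, and reduce the claim to the scalar bound on $(1+\lambda_\ell)^{\nu-q}\lambda_\ell^{2m}e^{-2\lambda_\ell t}$. The only cosmetic difference is that the paper absorbs your $\ell=0$ versus $\ell\ge1$ case split into a single substitution $s=\lambda_\ell t$, bounding $(T+s)^{\nu-q}s^{2m}e^{-2s}\le C_T$ uniformly in $s\ge0$, which handles the zero eigenvalue and the $T$-dependence of the constant in one stroke.
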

\begin{proof}
Adapting the argument of Thom\'ee~\cite[Lemma 3.2]{Tho97}, we see
from~\eqref{eq: E(t)u0} that the generalized Fourier coefficients
of~$\partial_t^m\E(t)u_0$ are
\[
\iprod{\partial_t^m\E(t)u_0,Y_{\ell k}}
	=(-\lambda_\ell)^me^{-\lambda_\ell t}\widehat{(u_0)}_{\ell k},
\]
so by~\eqref{eq: H^sigma norm},
\[
\|\partial_t^m\E(t)u_0\|_{H^\nu}^2=\sum_{\ell=0}^\infty
	(1+\lambda_\ell)^\nu\lambda_\ell^{2m}e^{-2\lambda_\ell t}
	\sum_{k=1}^{N(\ell,n)}\bigl|\widehat{(u_0)}_{\ell k}\bigr|^2.
\]
The result follows because, with~$s=\lambda_\ell t$,
\[
t^{\nu-q+2m}(1+\lambda_\ell)^{\nu-q}\lambda_\ell^{2m}e^{-2\lambda_\ell t}
	\le(T+s)^{\nu-q}s^{2m}e^{-2s}\le C_T
	\quad\text{for $0\le t\le T$.}
\]
\end{proof}

Let $\T:L_2\to H^2$ be the solution operator for the elliptic problem
\[
u-\LB u=f\quad\text{on $\Sphere^n$,}
\]
that is, $\T f:=u$.  Thus,
\[
a_1(\T f,v)=\iprod{f,v}\quad\text{for all $v\in H^1$,}
\]
and we can define $\T_h:L_2\to S_h$ by
\[
a_1(\T_hf,\chi)=\iprod{f,\chi}\quad\text{for all $\chi\in S_h$.}
\]
It follows that $\T_hf=R_hu=R_h\T f$ and $R_h=\T_h(I-\LB)$.  Since
\[
\iprod{f,\T_hw}=a_1(\T_hf,\T_hw)\quad\text{for all $f$, $v\in L_2$,}
\]
we see that $\T_h$ is self-adjoint and (taking $w=f$) strictly
positive-definite.

Rewriting the homogeneous equation~\eqref{equ:homo} as
$\partial_tu+(I-\LB)u=u$, we see that
\[
\T\partial_tu+u=\T u\quad\text{for $t>0$,}\quad\text{with $u(0)=u_0$,}
\]
and similarly the corresponding semidiscrete 
problem~\eqref{eq: semi homo} is equivalent to
\[
\T_h\partial_tu_h+u_h=\T_hu_h\quad\text{for $t>0$,}
	\quad\text{with $u_h(0)=u_{0h}$.}
\]
Thus, the error $\err=u_h-u$ satisfies
\begin{equation}\label{eq: e rho}
\T_h\partial_t\err+\err=\T_h\err+\rho\quad\text{where}\quad
\rho=(R_h-I)u.
\end{equation}

\begin{lemma}\label{lem: e rho}
With the notation above, if $u_{0h}=P_hu_0$ then
\[
\|\err(t)\|^2\le C_T\biggl(\|\rho(t)\|^2+\frac{1}{t}\int_0^t
	\bigl(s^2\|\partial_s\rho\|^2+\|\rho(s)\|^2\bigr)\,ds
	\biggr) \quad\text{for $0<t\le T$.}
\]
\end{lemma}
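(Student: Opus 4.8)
The plan is to adapt the energy argument of Thom\'ee~\cite[Chapter~3]{Tho97}, but working with the operator~$\T_h$ in place of~$\LB_h$. The basic structural fact is that $\T_h$ is self-adjoint and positive semidefinite on~$L_2(\Sphere^n)$, strictly positive definite on~$S_h$; thus $|v|_h:=\iprod{\T_h v,v}^{1/2}$ is a seminorm on~$L_2$ (a norm on~$S_h$) with $|v|_h\le\|v\|$, and, since $\T_h$ is self-adjoint and independent of~$t$, $\tfrac{d}{dt}|w(t)|_h^2=2\iprod{\T_h w_t,w}$ for any differentiable $w:(0,T]\to L_2$. Two further observations are needed. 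Because $a_1$ is coercive on $H^1\supseteq S_h$ and $P_h$ is the $L_2$-projector, $\T_h=\T_h P_h$; hence, with $u_{0h}=P_hu_0$, we get $\T_h\err(0)=\T_h(P_h-I)u_0=0$ and so $|\err(0)|_h=0$. And since $\err(s)=u_h(s)-u(s)\to(P_h-I)u_0$ in $L_2$ as $s\to0^+$, the norm $\|\err(s)\|$ stays bounded near~$0$ and $s\|\err(s)\|^2\to0$; we may assume the right-hand side of the stated inequality is finite (otherwise there is nothing to prove), and then $\int_0^t s^2\|\partial_s\rho\|^2\,ds<\infty$ easily gives $s^{1/2}\|\rho(s)\|=O(1)$ as $s\to0^+$, so $s\iprod{\rho(s),\err(s)}\to0$ as well.

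The first main step is a ``dual-norm'' estimate. Pairing the error equation~\eqref{eq: e rho} with~$\err$ in~$L_2$ and using $\iprod{\T_h\err_t,\err}=\tfrac12\tfrac{d}{dt}|\err|_h^2$ gives
\[
\tfrac12\tfrac{d}{dt}|\err|_h^2+\|\err\|^2=|\err|_h^2+\iprod{\rho,\err}.
\]
Estimating $\iprod{\rho,\err}$ by Cauchy--Schwarz and Young, this yields $\tfrac{d}{dt}|\err|_h^2\le2|\err|_h^2+\|\rho\|^2$, so Gr\"onwall's inequality with $|\err(0)|_h=0$ gives $|\err(t)|_h^2\le C_T\int_0^t\|\rho(s)\|^2\,ds$ for $0<t\le T$. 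Integrating the displayed identity over $(0,t)$ and substituting this bound for $\int_0^t|\err|_h^2\,ds$ then gives the companion estimate $\int_0^t\|\err(s)\|^2\,ds\le C_T\int_0^t\|\rho(s)\|^2\,ds$.

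The second main step produces the pointwise-in-$t$ bound. Pairing~\eqref{eq: e rho} with~$\err_t$, using $\iprod{\T_h\err,\err_t}=\tfrac12\tfrac{d}{dt}|\err|_h^2$, and discarding the nonnegative term $|\err_t|_h^2$ gives $\tfrac{d}{dt}\|\err\|^2\le\tfrac{d}{dt}|\err|_h^2+2\iprod{\rho,\err_t}$. I would multiply by~$t$ and integrate over $(0,t)$; the boundary terms at $s=0$ vanish by the preliminary observations, and an integration by parts rewrites $2\int_0^t s\iprod{\rho,\err_s}\,ds$ as $2t\iprod{\rho(t),\err(t)}-2\int_0^t\iprod{\rho,\err}\,ds-2\int_0^t s\iprod{\partial_s\rho,\err}\,ds$. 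Using $2t\iprod{\rho(t),\err(t)}\le\tfrac t2\|\err(t)\|^2+2t\|\rho(t)\|^2$, applying Young's inequality to the two remaining integrals in the forms $2\|\rho\|\,\|\err\|\le\|\rho\|^2+\|\err\|^2$ and $2s\|\partial_s\rho\|\,\|\err\|\le s^2\|\partial_s\rho\|^2+\|\err\|^2$, and replacing $t|\err(t)|_h^2$ and each $\int_0^t\|\err\|^2\,ds$ by $C_T\int_0^t\|\rho\|^2\,ds$ via the first step, one arrives at
\[
t\|\err(t)\|^2\le\tfrac t2\|\err(t)\|^2+C_T\Bigl(t\|\rho(t)\|^2+\int_0^t\|\rho(s)\|^2\,ds+\int_0^t s^2\|\partial_s\rho\|^2\,ds\Bigr).
\]
Absorbing $\tfrac t2\|\err(t)\|^2$, dividing by~$t$, and using $\int_0^t\|\rho\|^2\,ds\le T\bigl(\tfrac1t\int_0^t\|\rho\|^2\,ds\bigr)$ for $0<t\le T$ gives the assertion.

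I expect the $\int_0^t\|\err\|^2\,ds$ terms coming from the integration by parts to be the crux: unlike in the classical parabolic analysis they are not visibly of lower order, and the estimate closes only because the first step controls them by $C_T\int_0^t\|\rho\|^2\,ds$ --- which in turn hinges on the identity $\T_h\err(0)=0$, which is what keeps the Gr\"onwall constant free of~$\err$. A secondary, essentially bookkeeping point is to justify differentiation under the integral sign and the vanishing of the $s\to0^+$ boundary terms, using the smoothing estimates of Theorem~\ref{thm: regularity} together with the harmless finiteness assumption on the right-hand side; the zero eigenvalue of~$\LB$ causes no difficulty because we work throughout with $a_1=a+\iprod{\cdot,\cdot}$, coercive on all of~$H^1$.
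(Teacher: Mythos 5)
Your argument is correct and follows essentially the same route as the paper's proof: the same two pairings of the error equation (with $\err$ and with $\partial_t\err$), multiplication by $t$ and integration by parts on $\int_0^t s\iprod{\rho,\partial_s\err}\,ds$, and the crucial use of $\T_h\err(0)=0$ to control $\iprod{\T_h\err,\err}$ and $\int_0^t\|\err\|^2\,ds$ by $\int_0^t\|\rho\|^2\,ds$. The only differences are cosmetic --- you invoke Gr\"onwall where the paper uses the integrating factor $e^{-2t}$ explicitly, and you perform the two steps in the opposite order.
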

\begin{proof}
We modify the argument of Thom\'ee~\cite[Lemma~3.3]{Tho97}.
Taking the inner product of~\eqref{eq: e rho} with~$\partial_t\err$ 
gives
\[
\iprod{\T_h\partial_t\err,\partial_t\err}+\iprod{\err,\partial_t\err}
	=\iprod{\T_h\err+\rho,\partial_t\err},
\]
and since $\iprod{\T_h\partial_t\err,\partial_t\err}\ge0$ and
$\iprod{\err,\partial_t\err}=(1/2)\partial_t\|\err\|^2$, it follows that
\[
\partial_t\|\err\|^2\le2\iprod{\T_h\err+\rho,\partial_t\err},
\]
implying that
\[
\partial_t\bigl(t\|\err\|^2\bigr)=\|\err\|^2+t\partial_t\|\err\|^2
	\le\|\err\|^2+2t\iprod{\T_h\err+\rho,\partial_t\err}.
\]
Since 
\[
2t\iprod{\T_he,\partial_te}=t\partial_t\iprod{\T_he,e}
	\le\partial_t\bigl(t\iprod{\T_he,e}\bigr)
\]
and
\[
t\iprod{\rho,\partial_t\err}=
\partial_t\bigl(t\iprod{\rho,\err}\bigr)-t\iprod{\partial_t\rho,\err}
-\iprod{\rho,\err},
\]
we have
\[
\partial_t\bigl(t\|\err\|^2\bigr)\le\|\err\|^2
	+\partial_t\bigl(t\iprod{\T_h\err+2\rho,\err}\bigr)
	-2t\iprod{\partial_t\rho,\err}-2\iprod{\rho,\err},
\]
so integration gives
\[
t\|\err\|^2\le\int_0^t\|\err(s)\|^2\,ds+t\iprod{\T_h\err+2\rho,\err}
	+2\int_0^t\bigl|\iprod{s\partial_s\rho+\rho(s),\err(s)}\bigr|\,ds,
\]
and using $2\iprod{\rho,\err}\le4\|\rho\|^2+(1/2)\|\err\|^2$, 
\begin{equation}\label{eq: t||err||^2}
t\|\err\|^2\le2t\iprod{\T_h\err,\err}+8t\|\rho\|^2
	+2\int_0^t\bigl(s^2\|\partial_s\rho\|^2+\|\rho(s)\|^2
		+2\|\err(s)\|^2\bigr)\,ds.
\end{equation}
To deal with the terms in~$\err$ on the right-hand side,
take the inner product of~\eqref{eq: e rho} with~$\err$, obtaining
\[
(1/2)\partial_t\iprod{\T_h\err,\err}+\|\err\|^2=\iprod{\T_h\err+\rho,\err},
\]
or equivalently,
$\partial_t\iprod{\T_h\err,\err}-2\iprod{\T_h\err,\err}+2\|\err\|^2
=2\iprod{\rho,\err}$. After multiplying by the integrating 
factor~$e^{-2t}$,
\begin{equation}\label{eq: ifact}
\partial_t\bigl(e^{-2t}\iprod{\T_h\err,\err}\bigr)+2e^{-2t}\|\err\|^2
	=2e^{-2t}\iprod{\rho,\err},
\end{equation}
and the choice $u_{0h}=P_hu_0$ means that $\T_h\err(0)=0$ because
\[
\iprod{\T_he(0),w}=\iprod{\T_h(P_h-I)u_0,w}=\iprod{(P_h-I)u_0,\T_hw}=0
\]
for every $w\in L_2$.  Thus, 
\begin{multline*}
e^{-2t}\iprod{\T_h\err,\err}+2\int_0^t e^{-2s}\|\err(s)\|^2\,ds
	=2\int_0^t e^{-2s}\iprod{\rho(s),\err(s)}\,ds\\
	\le\int_0^te^{-2s}\bigl(\|\rho(s)\|^2+\|\err(s)\|^2\bigr)\,ds,
\end{multline*}
implying that
\[
\iprod{\T_h\err,\err}+\int_0^te^{2(t-s)}\|\err(s)\|^2\,ds
	\le\int_0^te^{2(t-s)}\|\rho(s)\|^2\,ds.
\]
Hence,
\[
2t\iprod{\T_h\err,\err}+4\int_0^t\|\err(s)\|^2\,ds
	\le2\max(t,2)\int_0^te^{2(t-s)}\|\rho(s)\|^2\,ds,
\]
and inserting this bound in~\eqref{eq: t||err||^2} gives 
\[
\|\err(t)\|^2\le8\|\rho(t)\|^2
	+\frac{2}{t}\int_0^ts^2\|\partial_s\rho\|^2\,ds
	+3\max(1,2t^{-1})\int_0^te^{2(t-s)}\|\rho(s)\|^2\,ds.
\]
\end{proof}

\begin{theorem}\label{thm: nonsmooth}
Let $u$ be the solution of the homogeneous problem~\eqref{equ:homo}
with initial data~$u_0$, let $u_h$ be the
semidiscrete approximation given by~\eqref{eq: semi homo} 
with~$u_{0h}=P_hu_0$. For $1\le\nu\le2\tau$:
\begin{enumerate}
\item if $u_0\in H^\nu(\Sphere^n)$, then
\[
\|u_h(t)-u(t)\|\le C_Th_X^\nu\|u_0\|_{H^\nu}
	\quad\text{for $0\le t\le T$;}
\]
\item if $u_0\in L_2(\Sphere^n)$ and $2\tau$ is an integer, then
\[
\|u_h(t)-u(t)\|\le C_Tt^{-\nu/2}h_X^\nu\|u_0\|
	\quad\text{for $0<t\le T$.}
\]
\end{enumerate}
\end{theorem}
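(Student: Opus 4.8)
\section*{Proof proposal}

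The proof of Part~1 is a direct application of Lemma~\ref{lem: e rho}. With $f\equiv0$ and $u_{0h}=P_hu_0$, write $u(s)=\E(s)u_0$ and $\rho(s)=(R_h-I)u(s)$; since $R_h$ is independent of~$s$, also $\partial_s\rho(s)=(R_h-I)\partial_s\E(s)u_0$. By Theorem~\ref{thm: Rh error}, $\|\rho(s)\|\le Ch_X^\nu\|\E(s)u_0\|_{H^\nu}$ and $\|\partial_s\rho(s)\|\le Ch_X^\nu\|\partial_s\E(s)u_0\|_{H^\nu}$, while Theorem~\ref{thm: regularity} with $q=\nu$ gives $\|\E(s)u_0\|_{H^\nu}\le C_T\|u_0\|_{H^\nu}$ (take $m=0$) and $\|\partial_s\E(s)u_0\|_{H^\nu}\le C_Ts^{-1}\|u_0\|_{H^\nu}$ (take $m=1$). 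Hence both $\|\rho(s)\|^2$ and $s^2\|\partial_s\rho(s)\|^2$ are bounded by $C_Th_X^{2\nu}\|u_0\|_{H^\nu}^2$, and substituting into Lemma~\ref{lem: e rho} yields $\|u_h(t)-u(t)\|\le C_Th_X^\nu\|u_0\|_{H^\nu}$.

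For Part~2 the key observation is that two bounds are already at hand with $u_0\in L_2=H^0$. First, running the Part~1 argument with $\nu=1$ but now using $q=0$ in Theorem~\ref{thm: regularity}, one estimates $\int_0^t\|\rho(s)\|^2\,ds$ and $\int_0^t s^2\|\partial_s\rho(s)\|^2\,ds$ by expanding the $H^1$-norms through \eqref{eq: H^sigma norm} and integrating the exponential factors term by term (using $\int_0^\infty s^2e^{-2\lambda_\ell s}\,ds=1/(4\lambda_\ell^3)$ and $\lambda_\ell^{-1}(1+\lambda_\ell)\le1+n^{-1}$ for $\ell\ge1$); Lemma~\ref{lem: e rho} then gives $\|u_h(t)-u(t)\|\le C_Th_Xt^{-1/2}\|u_0\|$. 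Second, Theorem~\ref{thm:space dis} with $\nu=2$, since then $\|u_0\|_{H^{\nu-2}}=\|u_0\|$, gives $\|u_h(t)-u(t)\|\le C_Th_X^2t^{-1}\|u_0\|$. Writing $y=h_Xt^{-1/2}$ and using $\min(y,y^2)\le y^\nu$ for $1\le\nu\le2$, the minimum of these two estimates is exactly the required $C_Th_X^\nu t^{-\nu/2}\|u_0\|$ on the range $1\le\nu\le2$.

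To reach $\nu>2$ I would iterate a half-time shift. Since $\E(t)=\E(t/2)^2$ and $\E_h(t)P_h=\E_h(t/2)^2P_h$, one has
$u_h(t)-u(t)=\E_h(t/2)P_h\bigl(u_h-u\bigr)(t/2)+\bigl[\E_h(t/2)P_h-\E(t/2)\bigr]\E(t/2)u_0$,
in which the second summand has the smooth data $\E(t/2)u_0\in H^{2\tau}$ and is estimated by Part~1 together with Theorem~\ref{thm: regularity}, contributing $C_Th_X^\nu t^{-\nu/2}\|u_0\|$ for every $\nu\le2\tau$; combined with the $\nu=1$ energy bound via the $\min$ device above, this is meant to propagate the estimate upward through $\nu=1,2,3,\dots,2\tau$, so integrality of $2\tau$ guarantees the induction terminates at the top exponent. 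The hard part, as I see it, is the first summand: one must bound the residual semidiscrete error at time $t/2$ tightly enough that the sharp spatial rate $h_X^\nu$ is not degraded in the regime $t\gg h_X^2$ — equivalently, one needs the higher-order ($\nu>2$) direct estimates for $L_2$-data that the induction consumes — and it is in keeping the Sobolev exponents aligned in this step that the hypothesis that $2\tau$ be an integer is genuinely used.
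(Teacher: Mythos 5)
Part~1 of your proposal is correct and is exactly the paper's argument. Your route to Part~2 on the range $1\le\nu\le2$ — combining the $\nu=1$ energy bound from Lemma~\ref{lem: e rho} with the contour-integral bound of Theorem~\ref{thm:space dis} at $\nu=2$ (where $\|u_0\|_{H^{\nu-2}}=\|u_0\|$) and taking the minimum via $\min(y,y^2)\le y^\nu$ — is a legitimate alternative to the paper's interpolation, provided you treat the $\ell=0$ mode separately in the term-by-term integration (since $\lambda_0=0$, the bound $\int_0^te^{-2\lambda_\ell s}\,ds\le C\lambda_\ell^{-1}$ fails there; this is why the paper ends up with a $t$-dependent constant $C_t$). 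But the case $2<\nu\le2\tau$, which is the real content of the theorem since $\tau>n/2\ge1$, is not established, and you say so yourself: your recursion is $\|\F_h(t)u_0\|\le C\|\F_h(t/2)u_0\|+Ch^\nu t^{-\nu/2}\|u_0\|$ with $\F_h(t)u_0:=u_h(t)-u(t)$, and because the first summand carries only the stability constant of $\E_h(t/2)P_h$, iterating it $j$ times yields $C^j\|\F_h(t/2^j)u_0\|$, which is bounded only by $C^j\|u_0\|$ — no power of $h$ is ever gained, so the induction cannot close at any rate above the one you start from.

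The missing idea is to split the propagator in the first summand as $\E_h(t/2)P_h=\E(t/2)+\F_h(t/2)$, which turns your two-term identity into the paper's three-term one, $\F_h(t)=\F_h(t/2)^2+\F_h(t/2)\E(t/2)+\E(t/2)\F_h(t/2)$. The two cross terms are the "smooth-data" contributions: $\F_h(t/2)\E(t/2)u_0$ is bounded by $Ch^\nu(t/2)^{-\nu/2}\|u_0\|$ using Part~1 and Theorem~\ref{thm: regularity}, exactly as you propose, and the reversed product $\E(t/2)\F_h(t/2)$ obeys the same bound because both operators are self-adjoint on $L_2$, so it is the adjoint of $\F_h(t/2)\E(t/2)$ and has the same operator norm. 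The quadratic term is the one that drives the induction: $\|\F_h(t/2)^2u_0\|\le\|\F_h(t/2)\|\,\|\F_h(t/2)u_0\|\le C(t^{-1/2}h)\|\F_h(t/2)u_0\|$, where the operator-norm factor $Ct^{-1/2}h$ is precisely your $\nu=1$ energy estimate read as a bound on $\F_h(t/2):L_2\to L_2$. Each halving of $t$ therefore multiplies the remainder by $t^{-1/2}h$, and (in the regime $t^{-1/2}h\le1$; otherwise stability already gives the result) after exactly $2\tau$ iterations the remainder is $(t^{-1/2}h)^{2\tau}\|u_0\|$, matching the leading term. This iteration count is where the integrality of $2\tau$ is genuinely used — not, as you suggest, in aligning Sobolev exponents. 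The intermediate exponents $1<\nu<2\tau$ then follow by writing $\|\F_h(t)u_0\|=\|\F_h(t)u_0\|^{1-\theta}\|\F_h(t)u_0\|^{\theta}$ with $\theta=\nu/(2\tau)$ and using stability for the first factor and the top-rate bound for the second.
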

\begin{proof}
We see at once from Lemma~\ref{lem: e rho} that
\[
\|\err(t)\|\le C_t\sup_{0\le s\le t}\bigl(\|\rho(s)\|
	+s\|\partial_s\rho\|\bigr),
\]
and if~$u_0\in H^\nu$ then, by Theorems \ref{thm: Rh error}~and
\ref{thm: regularity},
\[
\|\rho(s)\|+s\|\partial_s\rho(s)\|\le Ch^\nu\bigl(\|u(s)\|_{H^\nu}
	+s\|\partial_su(s)\|_{H^\nu}\bigr)
	\le Ch^\nu\|u_0\|_{H^\nu},
\]
which proves Part~1.

Assume now that $u_0\in L_2$.
By Theorems \ref{thm: Rh error}~and \ref{thm: regularity},
\[
\|\rho(t)\|=\|u(t)-R_hu(t)\|\le Ch\|u(t)\|_{H^1}
	\le Cht^{-1/2}\|u_0\|,
\]
and the expansion~\eqref{eq: E(t)u0} in spherical harmonics implies that
\begin{align*}
\int_0^t\|\rho(s)\|^2\,ds
	&\le Ch^2\int_0^t\|u(s)\|_{H^1}^2\,ds\\
	&=Ch^2\sum_{\ell=0}^\infty (1+\lambda_\ell)
	\sum_{k=1}^{N(n,\ell)}\bigl|\widehat{(u_0)}_{\ell k}\bigr|^2
	\int_0^te^{-2\lambda_\ell s}\,ds.
\end{align*}
If $\ell\ge1$ then $\lambda_\ell\ge\lambda_1=n$ so the
substitution~$s=\sigma/\lambda_\ell$ gives
\[
(1+\lambda_\ell)\int_0^te^{-2\lambda_\ell s}\,ds
	=\frac{1+\lambda_\ell}{\lambda_\ell}
	\int_0^{\lambda_\ell t}e^{-2\sigma}\,d\sigma
	\le(1+n^{-1})\int_0^\infty e^{-2\sigma}\,d\sigma\le1,
\]
and thus
\[
\int_0^t\|\rho(s)\|^2\,ds\le Ch^2\biggl(t|\widehat{(u_0)}_{01}|^2
	+\sum_{\ell=1}^\infty\sum_{k=1}^{N(n,\ell)}
		|\widehat{(u_0)}_{\ell k}|^2\biggr)\le C_th^2\|u_0\|^2.
\]
Similarly, 
\begin{align*}
\int_0^ts^2\|\partial_s\rho\|^2\,ds
	&\le Ch^2\int_0^t s^2\|\partial_su(s)\|_{H^1}^2\,ds\\
	&=Ch^2\sum_{\ell=1}^\infty (1+\lambda_\ell)\lambda_\ell^2 
	\int_0^t s^2e^{-2\lambda_\ell s}\,ds
	\sum_{k=1}^{N(n,\ell)}
	|\widehat{(u_0)}_{\ell k}|^2
\end{align*}
and for all $\ell\ge1$,
\[
(1+\lambda_\ell)\lambda_\ell^2\int_0^t s^2e^{-2\lambda_\ell s}\,ds
	=\frac{1+\lambda_\ell}{\lambda_\ell}\int_0^{\lambda_\ell t}
	\sigma^2e^{-2\sigma}\,d\sigma
	\le C,
\]
so $\int_0^ts^2\|\partial_s\rho\|^2\,ds\le Ch^2\|u_0\|^2$.  Applying
Lemma~\ref{lem: e rho}, Part~2 follows in the special case~$\nu=1$.

To deal with case~$\nu=2\tau$, we introduce the
solution operator for the semidiscrete problem, $\E_h(t)u_0:=u_h(t)$,
and use the semigroup property: $\E(s+t)=\E(s)\E(t)$~and 
$\E_h(s+t)=\E_h(s)\E_h(t)$ for all $s$~and $t$.  The error
operator~$\F_h(t)=\E_h(t)-\E(t)$ satisfies the identity
\begin{align*}
\F_h(t)-\F_h(t/2)^2&=\E_h(t/2)^2-\E(t/2)^2-\bigl[\E_h(t/2)-\E(t/2)\big]^2\\
	&=\F_h(t/2)\E(t/2)+\E(t/2)\F_h(t/2),
\end{align*}
and by Part~1 and Theorem~\ref{thm: regularity},
\[
\|\F_h(t/2)\E(t/2)u_0\|\le Ch^\nu\|\E(t/2)u_0\|_{H^\nu}
	\le Ch^\nu(t/2)^{-\nu/2}\|u_0\|.
\]
Since $\E(t/2)$~and $\F_h(t/2)$ are self-adjoint in~$L_2$, the
same estimate holds for the reversed product~$\E(t/2)\F_h(t/2)$,
and therefore
\begin{equation}\label{eq: Fh(t)}
\|\F_h(t)u_0\|\le Ct^{-\nu/2}h^\nu\|u_0\|+Ct^{-1/2}h\|\F_h(t/2)u_0\|.
\end{equation}
The stability estimates $\|\E(t)u_0\|\le\|u_0\|$~and 
$\|\E_h(t)u_0\|\le C\|u_0\|$ mean that it suffices to consider the
case $t^{-1/2}h\le1$, when repeated application of the 
estimate~\eqref{eq: Fh(t)} gives
\[
\|\F_h(t)u_0\|\le Ct^{-\nu/2}h^\nu\|u_0\|+C(t^{-1/2}h)^j\|\F_h(t/2^j)u_0\|
\]
for $j=0$, 1, 2, \dots, $\nu=2\tau$, and thus
$\|\F_h(t)u_0\|\le Ct^{-\tau}h^{2\tau}\|u_0\|$.  For the remaining 
case~$1<\nu<2\tau$, let $\theta=\nu/(2\tau)$ and observe that
\begin{align*}
\|\F_h(t)u_0\|&=\|\F_h(t)u_0\|^{1-\theta}\|\F_h(t)u_0\|^\theta\\
	&\le C\|u_0\|^{1-\theta}\bigl[(t^{-1/2}h)^{2\tau}
		\|u_0\|\bigr]^\theta=Ct^{-\nu/2}h^\nu\|u_0\|.
\end{align*}
\end{proof}
\section{Numerical experiments}\label{sec: num exp}
We present the results of some numerical experiments with two model
problems.  In both cases, the integration contour~\eqref{equ:z xi} and
quadrature step~$k$ are chosen as in Theorem~\ref{thm:time dis}, with
\[
T=1,\quad\omega=1,\quad\theta=1/2,\quad\delta=\pi/4,\quad r=\pi/4;
\]
Figure~\ref{fig:cont} shows the case~$N=20$.  Our conference 
paper~\cite{LeGiaMcLean2011} presents some earlier numerical examples.

\subsection{A scalar problem}
Consider the ODE $u'+u=f(t)$ for~$t>0$, with $u(0)=1$.  We
choose the source term~$f$ so that the exact solution is
\[
u(t) = 1+ \frac{4t^{3/2}} {3\sqrt{\pi}},
\] 
which has the Laplace transform~$\hat u(z) = z^{-1} + z^{-5/2}$.
In this case, no spatial discretization is required, and the
numerical solution~$U_N$ is given by~\eqref{equ:UN}.
Table~\ref{tab:err1} shows the error at~$t=2$ for different
values of~$N$.  The rapid convergence is consistent with the
error bound of Theorem~\ref{thm:time dis}, but as~$N$ increases
the quadrature eventually becomes unstable.

\begin{figure}
\begin{center}
\scalebox{0.6}{\includegraphics{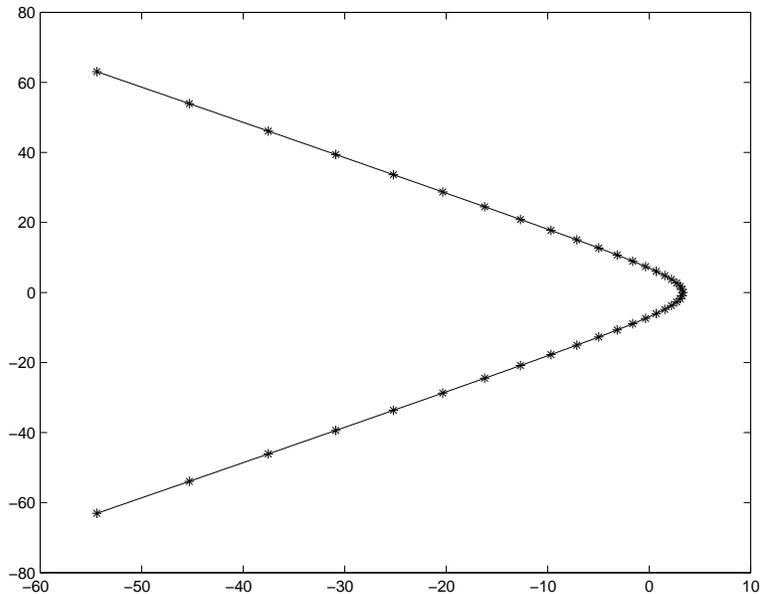}}
\caption{The integration contour~$\Gamma$ and quadrature points~$z_j$
when~$N=20$.}\label{fig:cont}
\end{center}
\end{figure}
\begin{table}[h]
\begin{center}
\renewcommand{\arraystretch}{1.2}
\begin{tabular}{c|ccccc}
\hline
$N$            &    10&        20&        30&        35&        40 \\ 
\hline
$|U_N(2)-u(2)|$&1.71E-04 & 6.44E-08 & 3.75E-11& 7.52E-13 &1.16E-12 \\

\hline
\end{tabular}
\caption{Errors for a scalar problem.}\label{tab:err1}
\end{center}
\end{table}
\subsection{Heat equation on the unit sphere}
Fix $0<a<1$ and define $u_0:\Sphere^2\to\C$ for $x=(x_1,x_2,x_3)\in\Sphere^2$
by
\begin{equation}\label{equ:u0}
u_0(x)=\begin{cases}
	1,&\text{if $a\le x_3\le 1$,}\\
	0,&\text{if $-1\le x_3<a$.}
\end{cases}
\end{equation}
This axially symmetric function has the Fourier--Legendre expansion
\[
u_0(x)=\sum_{\ell=0}^\infty \widehat{(u_0)}_\ell P_\ell(x_3),
\quad\text{where $\widehat{(u_0)}_\ell
	 =\frac{2\ell+1}{2}\int_a^1 P_\ell(t) dt$.}
\]   
The zeroth coefficient is $\widehat{(u_0)}_0=(1-a)/2$, and the remaining 
coefficients are expressible in terms of Jacobi polynomials 
\cite[page~172]{Bye59}, \cite[Formula~18.9.15]{dlmf11}, 
\[   
\widehat{(u_0)}_\ell=\frac{1-a^2}{2}\,\frac{2\ell+1}{\ell(\ell+1)}
	P_\ell'(a)
	=(1-a^2) \frac{(2\ell+1)} {4\ell}\,P^{(1,1)}_{\ell-1}(a)
	\quad\text{for $\ell\ge1$;}
\]
consequently $\widehat{(u_0)}_\ell=O(\ell^{-1/2})$ as~$\ell\to\infty$
\cite[Theorem 7.32.2]{Sze59}.

The PDE~$u_t-\LB u=0$ with initial data~\eqref{equ:u0} describes heat 
diffusion from a spherical cap about the north pole onto the surface 
of the unit sphere~$\Sphere^2$.  By separating variables, we find 
that the exact solution is
\[
u(x,t) = \sum_{\ell=0}^\infty e^{-\ell(\ell+1)t}\widehat{(u_0)}_\ell
	P_\ell(x_3), \quad\text{for $x = (x_1,x_2,x_3)\in\Sphere^2$.}
\]

\begin{table}
\begin{center}
\renewcommand{\arraystretch}{1.2}
\begin{tabular}{|c|c|c|c|}
\hline
$m$ & $\rho_m(r)$    & Smoothness &$\tau$ \\
 \hline
 $2$ & $(1-r)^6_{+}(3+ 18r + 35r^2)$       & $C^4$ & $7/2$\\
 $3$ & $(1-r)^8_{+}(1+ 8r + 25r^2+ 32r^3)$ & $C^6$ & $9/2$\\
\hline
\end{tabular}
\caption{The compactly supported SRBFs of Wendland~\cite{Wen05}.}
\label{tab:rbfs}
\end{center}
\end{table}

For the spatial discretization, we use the compactly supported radial
basis functions introduced by Wendland~\cite{Wen05}, for which the
strictly positive-definite kernel has the form
\[
\Phi(x,y) = \rho_m\bigl(\sqrt{2-2x \cdot y}\bigr).
\]
In Table~\ref{tab:rbfs}, we show $\rho_2$~and $\rho_3$ explicitly,
along with the values of the exponent~$\tau$ in~\eqref{cond:what}.
We generate the set of points~$X$ using an equal area partitioning
algorithm of Saff and Kuijlaars~\cite{SafKui97}.  To compute the inner
products arising in the matrix
entries~\eqref{equ:B S} and the load vector components~$G_p(z)$, we use
a quadrature approximation of the form
\begin{equation}\label{equ:sphere quad}
\int_{\Sphere^2}v\,dS\approx\frac{2\pi}{R}\sum_{q=1}^R\sum_{p=1}^{R/2}w_p
v\bigl(\sin\theta_p\cos\phi_q,\sin\theta_p\sin\phi_q,\cos\theta_p\bigr),
\end{equation}
for an even number~$R\ge2$,
where $\int_{-1}^1f(z)\,dz\approx\sum_{p=1}^{R/2}w_pf(\cos\theta_p)$
is a Gauss--Legendre rule and $\phi_q=2\pi q/R$.  The error in the
approximation~\eqref{equ:sphere quad} is zero if the integrand~$v$ is
a polynomial of total degree~$R-1$ or less.


\begin{table}
\begin{center}
\begin{tabular}{|c|c|c|c|c|c|c|}
\hline
& $K$        &      200     &     400     &   600        &  801      &   1001     \\
& $h_X$      &    0.1796    &   0.1281    &   0.1039     & 0.0888    &  0.0794    \\
&  $R$       &     200      &    200      &  200         &  500      &    500     \\
\hline
$N=10$ & $e_{\max}$ &5.67E-05 & 5.06E-06 & 1.67E-06 & 9.89E-07 & 8.24E-07 \\ 
       & $e_2$      &4.63E-05 & 3.60E-06 & 1.10E-06 & 8.23E-07 & 7.81E-07 \\ 
       & EOC($e_2$) &         & 7.56E+00 & 5.66E+00 & 1.84E+00 & 4.66E-01 \\ 
\hline
$N=20$ & $e_{\max}$ &5.61E-05 & 4.47E-06 & 1.03E-06 & 3.26E-07 & 1.48E-07 \\ 
       & $e_2$      &4.61E-05 & 3.53E-06 & 8.07E-07 & 2.64E-07 & 1.20E-07 \\ 
       & EOC($e_2$) &         & 7.60E+00 & 7.05E+00 & 7.11E+00 & 7.06E+00 \\
\hline 
$N=30$ & $e_{\max}$ &5.61E-05 & 4.47E-06 & 1.03E-06 & 3.26E-07 & 1.48E-07 \\ 
       & $e_2$      &4.61E-05 & 3.53E-06 & 8.07E-07 & 2.64E-07 & 1.20E-07 \\ 
       & EOC($e_2$) &         & 7.60E+00 & 7.05E+00 & 7.11E+00 & 7.06E+00 \\ 
\hline
$N=35$ & $e_{\max}$ &5.61E-05 & 4.47E-06 & 1.03E-06 & 3.26E-07 & 1.48E-07 \\ 
       & $e_2$      &4.61E-05 & 3.53E-06 & 8.07E-07 & 2.64E-07 & 1.20E-07 \\ 
       & EOC($e_2$) &         & 7.60E+00 & 7.05E+00 & 7.11E+00 & 7.06E+00 \\
\hline
\end{tabular}
\caption{Numerical results with SRBFs constructed using $\rho_2$.} 
\label{tab:RBF4}
\end{center}
\end{table}
\begin{table}
\begin{center}
\begin{tabular}{|c|c|c|c|c|c|c|}
\hline
       & $K$        &      200  &     400   &   600      &  801    &   1001   \\
       &  $h_X$     &    0.1796 &   0.1281  &   0.1039   & 0.0888  &  0.0794  \\
       &  $R$       &     200   &    200    &  200       &  500    &    500     \\
\hline
$N=10$ & $e_{\max}$ & 6.86E-05 & 3.84E-06 & 1.17E-06 & 8.35E-07 & 7.79E-07 \\ 
       & $e_2$      & 3.93E-05 & 1.63E-06 & 7.85E-07 & 7.74E-07 & 7.71E-07 \\ 
       & EOC($e_2$) &          & 9.41E+00 & 3.50E+00 & 9.13E-02 & 3.37E-02 \\ 
\hline
$N=20$ & $e_{\max}$ & 6.78E-05 & 3.11E-06 & 4.54E-07 & 8.98E-08 & 3.24E-08 \\ 
       & $e_2$      & 3.91E-05 & 1.45E-06 & 2.12E-07 & 4.83E-08 & 1.73E-08 \\ 
       & EOC($e_2$) &          & 9.75E+00 & 9.17E+00 & 9.41E+00 & 9.18E+00 \\ 
\hline
 $N=30$& $e_{\max}$ & 6.78E-05 & 3.11E-06 & 4.54E-07 & 8.98E-08 & 3.24E-08 \\ 
       & $e_2$      & 3.91E-05 & 1.45E-06 & 2.12E-07 & 4.83E-08 & 1.73E-08 \\ 
       & EOC($e_2$) &          & 9.75E+00 & 9.17E+00 & 9.41E+00 & 9.18E+00 \\ 
\hline
$N=35$ & $e_{\max}$ & 6.78E-05 & 3.11E-06 & 4.54E-07 & 8.98E-08 & 3.24E-08 \\ 
       & $e_2$      & 3.91E-05 & 1.45E-06 & 2.12E-07 & 4.83E-08 & 1.73E-08 \\ 
       & EOC($e_2$) &          & 9.75E+00 & 9.17E+00 & 9.41E+00 & 9.18E+00 \\
\hline 
\end{tabular}
\caption{Numerical results with SRBFs constructed using $\rho_3$.} 
\label{tab:RBF6}
\end{center}
\end{table}

In the numerical experiments, we let $a = 0.9$ in the 
definition~\eqref{equ:u0} of~$u_0$. Tables~\ref{tab:RBF4}~and 
\ref{tab:RBF6} show values of the quantities
\[
 e_{\max} = \max_{x \in \QQ}\bigl|U_{N,h}(x,1) - u(x,1)\bigr|
\]
and
\[
 e_2 = \biggl(\sum_{x \in \QQ} w_{x} \bigl|U_{N,h}(x,1) - u(x,1)\bigr|^2
	\biggr)^{1/2},
\]
for different choices of $K$~and $R$.  Here, $\QQ$ is the set of 
quadrature points.

Since $u_0\in L_2(\Sphere^2)$, we expect from 
Theorem~\ref{thm: nonsmooth} and the triangle 
inequality~\eqref{eq: triangle ineq} that if~$N$ is sufficiently large 
then $e_2=O(h^{2\tau})$ --- that is, $O(h^7)$ using $\rho_2$, and $O(h^9)$ 
using $\rho_3$.  The observed convergence rates are close to these 
predicted values.
We remark that when~$K=1001$, the condition number of the linear 
system~\eqref{eq: uh hat} is around $10^7$ using $\rho_2$, and around 
$10^9$ using $\rho_3$, so we cannot expect to reduce the error much
below the smallest values shown in the tables.

\paragraph*{Acknowledgement} The first author is supported by the Australian Research Council.
\bibliographystyle{plain}
\bibliography{legia_mclean-refs}

\begin{thebibliography}{10}

\bibitem{Bye59}
W.~E. Bailey.
\newblock {\em An Elementary Treatise on Fourier's Series, and Spherical,
  Cylindrical, and Ellipsoidal Harmonics, with Applications to Problems in
  Mathematical Physics}.
\newblock Dover, New York, 1959.

\bibitem{CheMenSun03}
D.~Chen, V.~A. Menegatto, and X.~Sun.
\newblock A necessary and sufficient condition for strictly positive definite
  functions on spheres.
\newblock {\em Proc. Amer. Math. Soc.}, 131:2733--2740, 2003.

\bibitem{LeGiaMcLean2011}
Quoc Thong~Le Gia and William McLean.
\newblock Numerical solution of a parabolic equation on the sphere using
  {L}aplace transforms and radial basis functions.
\newblock In W.~McLean and A.~J. Roberts, editors, {\em Proceedings of the 15th
  Biennial Computational Techniques and Applications Conference, CTAC-2010},
  volume~52 of {\em ANZIAM J.}, pages C89--C102, 2011.

\bibitem{LeG05a}
Q.~T. {Le Gia}.
\newblock Approximation of parabolic {PDE}s on spheres using spherical basis
  functions.
\newblock {\em Adv. Comput. Math.}, 22:377--397, 2005.

\bibitem{McLTho10}
W.~McLean and V.~Thom\'{e}e.
\newblock Numerical solution via {L}aplace transforms of a fractional order
  evolution equation.
\newblock {\em J. of Integral Equations and Applications}, 22:57--94, 2010.

\bibitem{Mul66}
C.~M\"{u}ller.
\newblock {\em Spherical Harmonics}, volume~17 of {\em Lecture Notes in
  Mathematics}.
\newblock Springer-Verlag, Berlin, 1966.

\bibitem{dlmf11}
National Institute of Standards and Technology.
\newblock {\em Digital Library of Mathematical Functions}.
\newblock Release date 2011-07-01, http://dlmf.nist.gov/.

\bibitem{NitSch74}
J.A. Nitsche and A.H. Schatz.
\newblock Interior estimates for {R}itz--{G}alerkin methods.
\newblock {\em Math. Comp.}, 28:937--958, 1974.

\bibitem{SafKui97}
E.~B. Saff and A.~B.~J. Kuijlaars.
\newblock Distributing many points on a sphere.
\newblock {\em Math. Intelligencer}, 19:5--11, 1997.

\bibitem{Sch42}
I.~J. Schoenberg.
\newblock Positive definite function on spheres.
\newblock {\em Duke Math. J.}, 9:96--108, 1942.

\bibitem{SheSloTho99}
D.~Sheen, I.~H .Sloan, and V~.Thom\'{e}e.
\newblock A parallel method for time-discretization of parabolic equations
  based on contour integral representation and quadrature.
\newblock {\em Math. Comp.}, 69:177--195, 1999.

\bibitem{Sze59}
G.~Szeg\"{o}.
\newblock {\em Orthogonal Polynomials}.
\newblock American Mathematical Society, New York, 1959.

\bibitem{Tho97}
Vidar Thom\'ee.
\newblock {\em {G}alerkin Finite Element Methods for Parabolic Problems}.
\newblock Springer, Berlin, 1997.

\bibitem{TranEtAl2009}
T.~Tran, Q.~T.~Le Gia, I.~H. Sloan, and E.~P. Stephan.
\newblock Boundary integral equations on the sphere with radial basis
  functions: error analysis.
\newblock {\em Appl. Numer. Math.}, 59:2857--2871, 2009.

\bibitem{TranPham2008}
T.~Tran and T.~D. Pham.
\newblock Pseudodifferential equations on the sphere with spherical radial
  basis functions: Error analysis.
\newblock Applied Mathematics Report 2008/11, The University of New South
  Wales, 2008.

\bibitem{Wen05}
H.~Wendland.
\newblock {\em Scattered Data Approximation}.
\newblock Cambridge University Press, Cambridge, 2005.

\bibitem{XuChe92}
Y.~Xu and E.~W. Cheney.
\newblock Strictly positive definite functions on spheres.
\newblock {\em Proc. Amer. Math. Soc.}, 116:977--981, 1992.

\end{thebibliography}
\end{document}